\newtheorem{theorem}{Theorem}[section]
\newtheorem{proposition}[theorem]{Proposition}
\newtheorem{lemma}[theorem]{Lemma}
\newtheorem{cor}[theorem]{Corollary}
\theoremstyle{definition}
\numberwithin{equation}{section}
\begin{document}

\title[Weighted exponential sums and its applications]
{Weighted exponential sums and its applications}
\author{Nilanjan Bag}
 \address{Department of Mathematics, Thapar Institute of Engineering and Technology, Patiala, Punjab 147004, India}
\email{nilanjanb2011@gmail.com}
\author{Dwaipayan Mazumder}
 \address{Chennai Mathematical Institute, Kelambakkam, Siruseri, Tamil Nadu 603103, India}
\email{dwmaz.1993@gmail.com}
\subjclass[2020]{11L03, 11L20.}
\date{August 8, 2024}
\keywords{Trigonometric and exponential sums; divisor functions; m\"{o}bius function.}
\begin{abstract}
Let $f$ be a real polynomial with irrational leading co-efficient. In this article, we derive distribution of $f(n)$ modulo one for all $n$ with at least three divisors and also we study distribution of $f(n)$ for all square-free $n$ with at least two prime factors.
We study exponential sums when weighted by divisor functions and exponential sums over square free numbers. In particular, we are interested in evaluating 
\begin{align*}
\sum_{n\leq N}\tau(n)e\left(f(n)\right) ~\text{and}~\sum_{n\leq N}\mu^2(n)e\left(f(n)\right),
\end{align*}  
for some polynomial $f$, where $\tau$ is the divisor function and $\mu$ is the M\"{o}bius function. We get non-trivial estimates when the leading co-efficient  $\alpha$ of $f$ belongs to the \enquote{minor arc}.
\end{abstract}
\maketitle
\section{Introduction and statements of the results}
Exponential sums over polynomial phase, i.e. the sum of the following form
\begin{equation}\label{start}
\mathcal{S}_F(N)= \sum_{n\leq N} e(F(n)),
\end{equation}
where $F(x)=\alpha_0+\alpha_1x+\alpha_2x^2+...+\alpha_nx^k, \alpha_i \in \mathbb{R}$ and $e(x)=e^{2\pi ix}$ can be handled by well known Weyl and Vinogradov method \cite{IWA}. Before stating the goal of this paper, we would like to state the usual concept of major and minor arcs. The well known theorems of Dirichlet and Kronecker in the theory of Diophantine approximation have been studied for long. According to the Dirichlet approximation theorem, for $\alpha \in \mathbb{R}$, there are integers $a$ and $q$, such that
\begin{align}\label{Dio}
 \left|\alpha-\frac{a}{q}\right|\leq \frac{1}{qP}, 
\end{align}
with $1\leq q\leq P$ and $(a,q)=1$, where $P$ is a fixed positive number. Let $Q$ be a positive integer with $P\geq 2Q\geq2$. If $\alpha$ satisfies \eqref{Dio} with $q\leq Q$ we say $\alpha$ belongs to \enquote{major arc}
\begin{equation}\label{Majorarc}
    \mathcal{M}= \left\{\alpha \mid \bigg|\alpha-\frac{a}{q}\bigg|\leq \frac{1}{qP} \quad \text{with} \quad q\leq Q, (a,q)=1\right\}.
\end{equation}
The remaining part $\mathfrak{m}=[0,1]\setminus\mathcal{M}$ is called \enquote{minor arc}.\par
Exponential sums of the form \eqref{start} is a crucial quantity in analytic number theory for its various applications in different number theoretic problems. For example such sums arise in the applications of the Hardy-Littlewood circle method to the Waring Goldbach problems. For more details, see \cite{RCV}. When taking summation over primes, such sums occur in the problems involving the distribution of $\alpha p^k$ modulo one \cite{BH, KCW}.  Now let us consider the weighted version of \eqref{start}, namely
\begin{equation}\label{main}
\mathcal{S}_{F,a} = \sum_{n\leq N} a(n)e(F(n)),
\end{equation}
where $F(n)$ is a polynomial as before and $a(n)$ is a multiplicative function, with some restriction on both the multiplicative function and  the polynomial. There are numerous works and significant literatures towards such problems. The estimate of the exponential sum over primes in short intervals was first studied by I. M. Vinogradov \cite{IMV}. Take $F(n)=n^k$. Then for the case $k=1$, that is for the linear exponential sums, such sums have been 
tackled extensively for  its applications to the study of the Goldbach–Vinogradov theorem with three almost
equal prime variables. For $k=2$, one can find the work of Liu and Zhang \cite{LZ} for short intervals. Later such results have been improved by L\"{u} and Lao \cite{LL}, which is as good as previously derived results using Generalized Riemann Hypothesis. Since then such directions have attracted many authors.
\par In our work, we consider those multiplicative functions  $a(n)$ which can be written as $b*c(n)$ where \enquote{$*$} is the usual Dirichlet convolution of two arithmetic functions. For the polynomial we restrict ourselves to those $F(n)$ whose leading term belongs to \enquote{minor arc} $\mathfrak{m}$. To be specific, in this paper we restrict ourselves to $\alpha(n)= \tau(n) \quad\text{and} \quad {\mu}^2(n)$. Granville, Vaughn, Wooley and other authors \cite{GVW}  have investigated the sum when $\alpha(n)=\mu^2(n)$ and $F(n)=n$, restricted to minor arc. Instead in this paper we take any polynomial $F(n)$ with some restrictions. Our method of evaluation to the weighted sum \eqref{main} is based on \cite{HAR}. Throughout the rest of this paper we consider $$\gamma=4^{1-k}.$$
At this point we are ready to state two results where we achieve non-trivial bounds for exponential sums when we take weights by $\tau(n)$ and $\mu^2(n)$. To be specific, we prove 
\begin{theorem}\label{MT1}
Let $f(x)$ be a real valued polynomial in $x$ of degree $k\geq 2$. Suppose $\alpha $ be the leading co-efficient of $f$ and there are integers $a$ and $q$ such that 
\begin{align*}
\left|\alpha-\frac{a}{q}\right|\leq \frac{1}{qP},
\end{align*}
for some fixed $P$, with $1\leq q\leq P$ and $(a,q)=1$. Then for fix $\epsilon >0$ and $0<\theta <1$, we have 
\begin{align*}
\sum_{n\leq N} \tau(n)e(f(n))\ll \log N\cdot  N^{1+\epsilon} \bigg(\frac{1}{q}+\frac{q}{N^k}+\frac{1}{N^{1-\theta}}\bigg)^{\gamma}.
\end{align*}
\end{theorem}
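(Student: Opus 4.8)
The plan is to exploit the Dirichlet convolution identity $\tau=\mathbf 1*\mathbf 1$, the simplest instance of the factorization $a=b*c$ on which the method is built, turning the weighted sum into a clean two-variable sum
\[
\sum_{n\le N}\tau(n)e(f(n))=\sum_{de\le N}e(f(de)),
\]
and then to separate the variables by Dirichlet's hyperbola method with a cut-off at $N^\theta$. Writing $\{de\le N\}$ as the union $\{d\le N^\theta\}\cup\{e\le N^{1-\theta}\}$ and using the symmetry $f(de)=f(ed)$, I would reduce matters to inner sums $T_d=\sum_{e\le N/d}e(f(de))$ with $d\le N^\theta$ (together with the analogous sums in which the roles of $d$ and $e$ are swapped, and a boundary box term with both $d,e$ small). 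Cutting at $N^\theta$ is what forces every inner sum to be long enough that the usual Weyl ``length'' term contributes at most $N^{-(1-\theta)}$, the quantity appearing in the statement, while the outer summation $\sum_{d\le N^\theta}1/d$ supplies the factor $\log N$.

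For each fixed $d$ the phase $f(de)$ is a real polynomial in $e$ of degree $k$ with leading coefficient $\alpha d^k$, so $T_d$ is a classical Weyl sum and I would invoke Weyl's inequality for it; this is the source of the shape $(1/q+q/N^k+N^{-(1-\theta)})$ raised to a Weyl exponent. The delicate point — and the step I expect to be the main obstacle — is that Weyl's inequality measures the saving through the rational approximation of the leading coefficient $\alpha d^k$, not of $\alpha$ itself. From $|\alpha-a/q|\le 1/(qP)$ one gets $|\alpha d^k-ad^k/q|\le d^k/(qP)$, and since $(a,q)=1$ the reduced denominator of $ad^k/q$ is $q/\gcd(d^k,q)$. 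Thus the saving $1/q$ degrades to $\gcd(d^k,q)/q$ for each $d$, and the heart of the argument is to control the accumulation of these gcd losses over $d\le N^\theta$ so that they are absorbed into the $N^\epsilon$ factor rather than destroying the main saving; I would do this by splitting $d$ according to the size of $\gcd(d^k,q)$ and using a standard divisor estimate.

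Finally I would assemble the pieces, applying Weyl to each $T_d$ and summing over the two convolution variables. Here the two variables each contribute a Weyl-type saving, and in combining them (either by direct summation of the Weyl bounds against the gcd control above, or by an intermediate Cauchy--Schwarz step used to tame the varying denominators) the sharp exponent $2^{1-k}$ degrades to its square $4^{1-k}=\gamma$; pinning down that the final exponent is exactly $\gamma$, rather than the sharp value, is the bookkeeping cost of handling the convolution weight, and it is the part I would check most carefully. The boundary box term, a two-dimensional sum over a product range, is controlled by the same Weyl estimate applied in a single variable and contributes at the same or lower order. Collecting the $\log N$ from the hyperbola summation, the $N^{1+\epsilon}$ from the trivial size together with the gcd bookkeeping, and the $\gamma$-th power of $(1/q+q/N^k+N^{-(1-\theta)})$ from Weyl, I expect to recover the stated bound.
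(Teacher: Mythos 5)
Your reduction via $\tau=1*1$ and the hyperbola method is a legitimate alternative entry point to the paper's Vaughan-type decomposition (Proposition \ref{weighted}): the pieces you produce are exactly of the shape of the paper's $\mathcal{T}_2$, a smooth coefficient on one variable against a long inner Weyl sum. The gap comes at the next step, where you apply Weyl's inequality to $T_d=\sum_{e\le N/d}e(f(de))$ using the approximant $ad^k/q$ of the dilated leading coefficient $\alpha d^k$. Weyl's inequality needs a rational approximation of quality $|\beta-b/r|\le 1/r^2$ with $(b,r)=1$; here the reduced fraction is $a_d/q_d$ with $q_d=q/\gcd(d^k,q)$, and all you know is $|\alpha d^k-a_d/q_d|\le d^k/(qP)$. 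The requirement $d^k/(qP)\le 1/q_d^2$ amounts to $qd^k\le \gcd(d^k,q)^2\,P$, which fails in precisely the regime the theorem targets: $q$ may be as large as $P$ (this is the minor-arc situation of the corollaries), and then for any $d\ge 2$ with $\gcd(d^k,q)=1$ the condition is violated. So the hypothesis of Weyl's inequality is simply unavailable for individual $d$; your gcd bookkeeping controls the \emph{size} of the denominator $q_d$ but not the \emph{quality} of the approximation, which is the actual obstruction, and taking a fresh Dirichlet approximation to $\alpha d^k$ does not help because its denominator is then unrelated to $q$.

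The device that circumvents this — and it is the real content of the paper's key inputs, Harman's Lemmas \ref{lem-h1} and \ref{lem-h2} — is to use Weyl in its raw form (the paper's Lemma 2.2, Schmidt's Lemma 10C), which bounds $|T_d|^{R}$, $R=2^{k-1}$, by $X^{R-k+\epsilon}\sum_{y\le k!X^{k-1}}\min\left(X,\|\alpha d^k y\|^{-1}\right)$ with $X=N/d$; then, after H\"older over a dyadic block $d\sim L$, one folds $z=d^k y$ into a single variable whose multiplicity is bounded by a divisor function, so that the resulting sum $\sum_{z}\min\left(X,\|\alpha z\|^{-1}\right)$ is estimated using the approximation of $\alpha$ itself — the only approximation you actually control. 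This also corrects your account of where $\gamma$ comes from: the Type I route (Lemma \ref{lem-h2}) gives exponent $1/R=2^{1-k}$ but with an unavoidable loss $L^{(k-1)/R}$, so it is usable only for small blocks $L$; for large $L$ one must switch to the bilinear estimate of Lemma \ref{lem-h1} (Cauchy--Schwarz, then Weyl), and it is this bilinear step — not a direct summation of per-$d$ Weyl bounds or gcd bookkeeping — that produces the exponent $\gamma=1/R^2=4^{1-k}$ of the statement. Without the folding-plus-divisor-bound mechanism and the two-regime case split in $L$, the proposal does not close.
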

\begin{cor}
   Let $Q=N^{1/2}$ in $\mathcal{M}$ defined in \eqref{Majorarc}. With the hypothesis of Theorem \ref{MT1}, $P=N$ and $\alpha \in \mathfrak{m}$ we have
$$
\sum_{n\leq N} \tau(n)e(f(n))\ll \log N \cdot N^{1+\epsilon-\frac{\gamma}{2}}.
$$
\end{cor}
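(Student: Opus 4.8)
The plan is to specialize Theorem \ref{MT1} by feeding in the arithmetic information that the minor-arc hypothesis supplies about the denominator $q$. First I would invoke the Dirichlet approximation \eqref{Dio} with the choice $P = N$: this guarantees a reduced fraction $a/q$ with $1 \leq q \leq N$ and $|\alpha - a/q| \leq 1/(qN)$, which is exactly the shape of hypothesis required to apply Theorem \ref{MT1}.

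The crucial step is to extract a lower bound for $q$ from the assumption $\alpha \in \mathfrak{m}$. Since the major arc $\mathcal{M}$ of \eqref{Majorarc} is here taken with $Q = N^{1/2}$ and $P = N$, membership $\alpha \in \mathfrak{m} = [0,1] \setminus \mathcal{M}$ means that no reduced fraction with denominator at most $N^{1/2}$ approximates $\alpha$ to within $1/(qN)$. Consequently the Dirichlet fraction produced in the previous step cannot satisfy $q \leq N^{1/2}$, and we obtain $N^{1/2} < q \leq N$.

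With these two-sided bounds on $q$ in hand, I would estimate the three terms inside the parenthesis of Theorem \ref{MT1} one at a time. The bound $q > N^{1/2}$ gives $1/q < N^{-1/2}$; the bound $q \leq N$ together with $k \geq 2$ gives $q/N^k \leq N^{1-k} \leq N^{-1} \leq N^{-1/2}$; and choosing the free parameter $\theta \in (0,\tfrac12]$ gives $1/N^{1-\theta} \leq N^{-1/2}$. Hence the entire parenthesized quantity is $\ll N^{-1/2}$, and raising it to the power $\gamma$ produces a factor $N^{-\gamma/2}$. Substituting back into Theorem \ref{MT1} collapses the estimate to $\log N \cdot N^{1+\epsilon}\cdot N^{-\gamma/2} = \log N \cdot N^{1+\epsilon - \gamma/2}$, which is the desired conclusion.

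The only point requiring genuine care, and the step I would flag as the main (conceptual rather than computational) obstacle, is the correct reading of the minor-arc condition: one must argue that the \emph{specific} Dirichlet fraction obtained with $P = N$ is \emph{forced} to have a large denominator, not merely that some approximation happens to be poor. Everything else is a routine comparison of sizes, and the admissibility of a choice $\theta \leq \tfrac12$ is assured because $\theta$ ranges freely over $(0,1)$ in the statement of Theorem \ref{MT1}.
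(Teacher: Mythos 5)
Your proposal is correct and is exactly the intended derivation: the paper states the corollary as an immediate specialization of Theorem \ref{MT1}, and your argument---Dirichlet approximation with $P=N$, the observation that $\alpha\in\mathfrak{m}$ forces the resulting denominator to satisfy $q>N^{1/2}$, and then term-by-term comparison with a choice of $\theta\leq\tfrac12$---is precisely that specialization. Your flagged point about reading the minor-arc condition as forcing the \emph{specific} Dirichlet fraction to have large denominator is the right subtlety, and you resolved it correctly.
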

In the next theorem, we replace the divisor function $\tau(n)$ by square of 
m\"{o}bius function $\mu^2(n)$ to derive the following:
\begin{theorem}\label{MT2}
Let $f(x)$ be a real valued polynomial in $x$ of degree $k\geq 2$. Suppose $\alpha $ be the leading co-efficient of $f$ and there are integers $a$ and $q$ such that 
\begin{align*}
\left|\alpha-\frac{a}{q}\right|\leq \frac{1}{qP},
\end{align*}
for some fixed $P$, with $1\leq q\leq P$ and $(a,q)=1$. Then for fix $\epsilon>0$ and $0<\theta<1$, we have 
\begin{align*}
\sum_{n\leq N} \mu^2(n)e(f(n))\ll N^{1+\epsilon} \bigg(\frac{1}{q}+\frac{q}{N^k}+\frac{1}{N^{1-\theta}}\bigg)^{\gamma}.
\end{align*}
\end{theorem}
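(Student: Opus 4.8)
The plan is to run the convolution-plus-Weyl argument used for Theorem~\ref{MT1}, with the factorisation $\tau = 1 * 1$ replaced by the elementary identity
\begin{align*}
\mu^2(n) = \sum_{d^2 \mid n}\mu(d),
\end{align*}
which expresses $\mu^2 = 1 * g$ for the multiplicative function $g$ supported on perfect squares with $g(d^2)=\mu(d)$. Substituting this and interchanging the order of summation yields
\begin{align*}
\sum_{n\le N}\mu^2(n)\,e\!\left(f(n)\right)=\sum_{d\le N^{1/2}}\mu(d)\sum_{m\le N/d^2} e\!\left(f(d^2 m)\right),
\end{align*}
so everything reduces to the inner sums $W_d:=\sum_{m\le N/d^2} e\!\left(f_d(m)\right)$, where $f_d(m)=f(d^2 m)$ is a real polynomial of degree $k$ in $m$ whose leading coefficient is $\alpha\, d^{2k}$. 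This is the exact analogue of the reduction in Theorem~\ref{MT1}, the only structural change being that the outer weight is now supported on squares, so that $d$ runs only up to $N^{1/2}$ and occurs to a single power.

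First I would estimate each $W_d$ by Weyl differencing, exactly as for the divisor sum. The point to preserve is that the $k-1$ Weyl steps should be carried out \emph{without} approximating the enlarged leading coefficient $\alpha\,d^{2k}$ by a rational of its own; after differencing one is left with sums of the form
\begin{align*}
\sum_{\vec h}\min\!\left(\frac{N}{d^2},\ \left\|k!\,\alpha\, d^{2k}\,h_1\cdots h_{k-1}\right\|^{-1}\right),
\end{align*}
in which $\alpha$ remains the sole irrational while $d^{2k}h_1\cdots h_{k-1}$ is an integer. Keeping $\alpha$ intact is what allows the single hypothesis $\left|\alpha-a/q\right|\le 1/(qP)$ to be used uniformly for all $d$, thereby sidestepping the degradation of the Diophantine approximation to $\alpha\,d^{2k}$ that would occur if each $W_d$ were handled in isolation. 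Summing these fractional-part sums against the standard count of lattice points with $\|k!\,\alpha\,(\cdot)\|$ small produces the three competing quantities $1/q$, $q/N^k$ and $N^{-(1-\theta)}$, the last reflecting a truncation of the ranges of $d$ and of the differencing parameters $h_i$ at a fixed power $N^{\theta}$; the admissible range $0<\theta<1$ is precisely the freedom in placing that truncation, and it is inherited unchanged from the argument for Theorem~\ref{MT1}.

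Recombining the outer sum over $d$ with these estimates then delivers the stated bound. Since the weight $\mu(d)$ offers no exploitable cancellation at this stage, it is majorised trivially and combined with the inner estimates through the same mechanism as in Theorem~\ref{MT1}; this is what produces the exponent $\gamma=4^{1-k}=(2^{1-k})^2$, the square of the classical single-variable Weyl exponent, which quantifies the cost of the bilinear convolution structure. Two simplifications relative to the divisor case should be noted: because $g$ is supported on squares the outer variable enters only to the first power and no Dirichlet hyperbola symmetrisation is required, so the divisor-type factor $\log N$ present in Theorem~\ref{MT1} is absent here; and the lengths $N/d^2$, the range $d\le N^{1/2}$, and the $\epsilon$-loss from the fractional-part counting collect cleanly into the single factor $N^{1+\epsilon}$.

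The step I expect to be the main obstacle is the uniform treatment of the coupled sum over the divisor variable $d$ and the Weyl parameters $h_1,\dots,h_{k-1}$: one must bound $\sum_d\sum_{\vec h}\min\!\left(\cdot,\ \|k!\,\alpha\,d^{2k}h_1\cdots h_{k-1}\|^{-1}\right)$ using only the one approximation to $\alpha$, control the multiplicity with which a given integer $d^{2k}h_1\cdots h_{k-1}$ arises, and check that the tail discarded by the truncation at $N^{\theta}$ stays within the claimed error. Arranging for the power of $N$ to come out as exactly $1+\epsilon$, with the savings concentrated in $\big(1/q+q/N^k+N^{-(1-\theta)}\big)^{\gamma}$ and with no stray logarithm, is the delicate bookkeeping at the heart of the proof.
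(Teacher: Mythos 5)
Your starting identity $\mu^2=1*\nu$ (with $\nu$ supported on squares, $\nu(d^2)=\mu(d)$) is the same as the paper's, but the route you take from it is not, and the difference is fatal. The paper never performs the single interchange $\sum_{d\le N^{1/2}}\mu(d)\sum_{m\le N/d^2}e(f(d^2m))$; instead it feeds the convolution into the Vaughan-type identity (Lemma \ref{Vaughanlike}, Proposition \ref{weighted}) with $U=V=N^{\theta}$, obtaining four sums of which two are genuinely bilinear (the pieces called $\mathcal{M}_3$ and $\mathcal{M}_4$, carrying $\mu$ and $\mu^2$ as inner weights), and it estimates those with Harman's bilinear Lemma \ref{lem-h1} and its corollary, reserving the Type I Lemma \ref{lem-h2} for the complementary ranges. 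The exponent $\gamma=4^{1-k}=R^{-2}$ (with $R=2^{k-1}$) is precisely the bilinear saving of Lemma \ref{lem-h1}; it does not arise, as you suggest, from trivially majorising $\mu(d)$ in a pure Type I argument.

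The gap is exactly the step you flag at the end: in a Type I-only scheme the coupled sum over $d$ and the Weyl parameters cannot be controlled well enough, because the inner sums $W_d$ are short precisely when $d$ is a power of $N$. Concretely, take $k=2$, $q\asymp N$ (a legitimate minor-arc case with $P=N$) and $\theta$ small. Weyl gives $|W_d|^2\ll X_d+\sum_{0<h<X_d}\min\bigl(X_d,\|2\alpha d^4h\|^{-1}\bigr)$ with $X_d=N/d^2$; keeping $\alpha$ intact and writing $z=d^4h$ inflates the range of $z$ to $\asymp \Delta^4X$ for $d\sim\Delta$, and the standard estimate $\sum_{z\le Z}\min\bigl(X,\|\alpha z\|^{-1}\bigr)\ll (Z/q+1)(X+q\log q)$ then contains the uncompensated term $Z\log q\asymp\Delta^4X\log q$. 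After Cauchy--Schwarz over $d\sim\Delta$ this yields $\sum_{d\sim\Delta}|W_d|\ll N^{1/2+\epsilon}\Delta^{3/2}$, to be played off against the trivial bound $N/\Delta$; optimising the crossover (at $\Delta\asymp N^{1/5}$) gives a total of $N^{4/5+\epsilon}$, whereas Theorem \ref{MT2} asserts $\ll N^{3/4+\theta/4+\epsilon}$, which is smaller whenever $\theta<1/5$. Equivalently, the tail forces a cutoff $D\ge N^{(1-\theta)/4}$ while your Weyl bound is adequate only for $\Delta\le N^{(1+\theta)/6}$, and for $\theta<1/5$ these ranges do not meet: the intermediate blocks of $d$ are covered by neither the trivial nor the Type I estimate. (Approximating $\alpha d^{2k}$ by $ad^{2k}/q$ instead does not rescue this: the reduced denominator $q/(q,d^{2k})$ can collapse, and the error $d^{2k}/(qP)$ is too large for generic $d$.) Those middle ranges are exactly what the paper's bilinear pieces and Lemma \ref{lem-h1} exist to handle, so some form of the Vaughan-style decomposition plus a bilinear estimate is an essential ingredient, not an optional one.
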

\begin{cor}
   Let $Q=N^{1/2}$ in $\mathcal{M}$ defined in \eqref{Majorarc}. With the hypothesis of Theorem \ref{MT1}, $P=N$ and $\alpha \in \mathfrak{m}$ we have
$$
\sum_{n\leq N} \mu^2(n)e(f(n))\ll N^{1+\epsilon-\frac{\gamma}{2}}.
$$
\end{cor}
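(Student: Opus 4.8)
The plan is to deduce the corollary directly from Theorem \ref{MT2} by specializing the parameters and exploiting what membership in the minor arc tells us about the denominator $q$. First I would invoke the Dirichlet approximation \eqref{Dio} with $P=N$, which guarantees integers $a,q$ with $1\le q\le N$ and $(a,q)=1$ satisfying $|\alpha-a/q|\le 1/(qN)$. Since $\alpha\in\mathfrak m$ and the major arc $\mathcal M$ is defined in \eqref{Majorarc} with threshold $Q=N^{1/2}$, this approximant cannot have $q\le Q$; otherwise the fraction $a/q$ would be one of those appearing in the definition of $\mathcal M$ and would witness $\alpha\in\mathcal M$, contradicting $\alpha\in\mathfrak m$. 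Hence $q>N^{1/2}$.

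With this lower bound on $q$ in hand, the next step is to estimate each of the three terms inside the parenthesis of Theorem \ref{MT2}. The term $1/q$ is immediately $<N^{-1/2}$. For the second term, the upper bound $q\le N$ together with $k\ge 2$ gives $q/N^k\le N^{1-k}\le N^{-1}$, which is even smaller. For the third term, I would fix the free parameter as $\theta=1/2$ (indeed any $\theta\le 1/2$ works), so that $N^{\theta-1}=N^{-1/2}$.

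Combining these, the quantity in parentheses is $\ll N^{-1/2}$, and raising to the power $\gamma=4^{1-k}$ yields $\bigl(\tfrac1q+\tfrac{q}{N^k}+\tfrac{1}{N^{1-\theta}}\bigr)^{\gamma}\ll N^{-\gamma/2}$, the implied constant $3^{\gamma}$ being harmless. Multiplying by the prefactor $N^{1+\epsilon}$ from Theorem \ref{MT2} then produces the claimed bound $N^{1+\epsilon-\gamma/2}$.

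There is no serious obstacle here: the corollary is a routine specialization of the main theorem. The only point requiring genuine care is the logical extraction of $q>N^{1/2}$ from the minor-arc hypothesis. One must observe that the Dirichlet approximant supplied with $1\le q\le P=N$ is precisely the kind of fraction constrained by the major-arc definition, so that failing to lie in $\mathcal M$ forces $q$ to \emph{exceed} the cutoff $Q=N^{1/2}$; this lower bound, rather than any delicate analysis of the sum itself, is what drives the saving $N^{-\gamma/2}$.
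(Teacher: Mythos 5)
Your proposal is correct and follows exactly the route the paper intends: the paper states this corollary without proof, and the intended argument is precisely your specialization of Theorem \ref{MT2} (the ``Theorem \ref{MT1}'' in the hypothesis is a typo), using the Dirichlet approximant with $P=N$ and the minor-arc condition to force $q>N^{1/2}$, then choosing $\theta=1/2$ so that all three terms in the parenthesis are $\ll N^{-1/2}$.
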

We have a real polynomial $f(x)$ of degree $k\geq 2$ with irrational leading co-efficient. For $\epsilon >0$, Harman \cite[Theorem 2]{HAR} proved that there are infinitely many solutions of $||f(p)||<p^{-\gamma/2+\epsilon}$ in primes where $\gamma$ is defined as before and $||x||$ is the distance of $x$ from the nearest integer. Also such results were previously studies by A. Ghosh \cite{AG} for some special cases. In this article, we refined the previous results and proved that there are infintely many solutions in $n$ with atleast three divisors. In particular, we prove the following. 
\begin{theorem}\label{MT3}
Let $f(x)$ be a real polynomial in $x$ of degree $k$, with irrational leading coefficient. Suppose $\epsilon>0$ is given. Then there are infinitely many solutions of 
\begin{align*}
||f(n)||<n^{-(\gamma/4)+\epsilon},
\end{align*} 
where $n$ has at least three divisors. 
\end{theorem}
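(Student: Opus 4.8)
The plan is to convert the exponential-sum estimate of Theorem \ref{MT1} into a distribution statement by a Fourier (circle-method) detection argument, and then to single out the integers with at least three divisors — that is, those $n$ that are neither $1$ nor prime, equivalently $\tau(n)\ge 3$ — by exploiting the size gap between the $\tau$-weighted count and the unweighted count of $n$ with $\|f(n)\|$ small. Fix a small $\epsilon>0$, put $\delta=N^{-\gamma/4+\epsilon}$ and $H\asymp\delta^{-1}=N^{\gamma/4-\epsilon}$. Since $\alpha$ is irrational it has infinitely many continued-fraction convergents $a/q$; for each such $q$ I choose a scale $N$ comparable to $q$ or to the next convergent denominator so that $q\in[N^{1/2},N]$ and $|\alpha-a/q|\le 1/(qN)$, which verifies the hypothesis of Theorem \ref{MT1} (with $P=N$) and places $\alpha$ in the minor arc $\mathfrak m$. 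These scales tend to infinity, which will eventually produce the infinitude of solutions. I introduce the $\tau$-weighted counting function $R(N)=\sum_{n\le N,\,\|f(n)\|<\delta}\tau(n)$ and the unweighted one $A(N)=\#\{n\le N:\|f(n)\|<\delta\}$.

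First I would lower-bound $R(N)$. Using a Selberg--Beurling trigonometric minorant $\chi_-$ of the indicator of $\{x:\|x\|<\delta\}$, of degree $H$ with $\widehat{\chi_-}(0)=2\delta-O(1/H)>0$ and $|\widehat{\chi_-}(h)|\ll\min(\delta,1/|h|)+1/H$, I obtain
\[
R(N)\ \ge\ \sum_{|h|\le H}\widehat{\chi_-}(h)\sum_{n\le N}\tau(n)e(hf(n)),
\]
where the term $h=0$ contributes $\asymp\delta\sum_{n\le N}\tau(n)\asymp\delta N\log N$, and each $h\neq0$ is estimated by Theorem \ref{MT1} applied to the polynomial $hf$, whose leading coefficient is $h\alpha$. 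Dually I would upper-bound $A(N)$ with a Selberg--Beurling majorant, whose $h=0$ term gives the main term $\asymp\delta N$ and whose remaining terms are controlled by Weyl's inequality (the case $a(n)=1$, whose saving is at least $(\,\cdot\,)^{\gamma}$). Granting that both error terms beat their main terms, I conclude $R(N)\gg\delta N\log N$ while $A(N)\ll\delta N$. Since a prime or the integer $1$ carries weight $\tau(n)\le 2$, the total contribution of such $n$ to $R(N)$ is at most $2A(N)\ll\delta N$, which is smaller than $R(N)$ by a factor $\gg\log N$. Hence for every large $N$ in the sequence there is some $n\le N$ with $\tau(n)\ge 3$ and $\|f(n)\|<\delta\le n^{-\gamma/4+\epsilon}$; letting $N\to\infty$, and noting that the sparse set of $n$ with $f(n)\in\mathbb Z$ cannot absorb the main term, the solutions must be unbounded, yielding infinitely many of them.

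The main obstacle, and precisely what forces the exponent $\gamma/4$ rather than $\gamma/2$, is the uniform Diophantine control of $h\alpha$ for $1\le h\le H$. Writing the Dirichlet approximation to $h\alpha$ as $a_h/q_h$, the inequality $\|q_h h\alpha\|\le 1/N$ together with the best-approximation property of the convergent $q$ (so that no $0<m<q$ has $\|m\alpha\|\le 1/N$) gives $q_h h\ge q$, hence $q_h\gg q/H\ge N^{1/2}/H=N^{1/2-\gamma/4+\epsilon}$. Since $q_h\le q\le N$ makes the terms $q_h/N^k$ and $N^{\theta-1}$ negligible for $k\ge 2$ and $\theta$ small, Theorem \ref{MT1} then yields $\sum_{n\le N}\tau(n)e(hf(n))\ll\log N\cdot N^{1+\epsilon}q_h^{-\gamma}\ll\log N\cdot N^{1-\gamma/2+\gamma^2/4+\epsilon}$ uniformly in $h\le H$, and summing the weights $\min(\delta,1/|h|)$ over $h$ costs only a further factor $\log N$. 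Comparing with the main terms $\delta N\log N$ and $\delta N$, the relevant ratio is $\gg N^{\gamma(1-\gamma)/4}/(\log N)^{2}\to\infty$ because $0<\gamma<1$; thus both required inequalities hold with room to spare, and this slack is exactly why the clean exponent $\gamma/4$ can be stated.

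Carrying out the continued-fraction bookkeeping — guaranteeing that the chosen convergent $q$ genuinely places $\alpha$ in $\mathfrak m$ at the scale $N$, and that $q_h$ never drops below the level $N^{1/2}/H$ for any $h\le H$ — is the technical heart of the argument. Once this is in place, the Selberg--Beurling minorant/majorant estimates and the Weyl bound are routine, and the separation $R(N)\gg\log N\cdot A(N)$ delivers the composite solutions required by Theorem \ref{MT3}.
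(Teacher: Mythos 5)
Your proposal takes a genuinely different route from the paper. The paper detects small $\|f(n)\|$ with the Baker--Harman lemma (Lemma \ref{lem-h3}) on the range $(N^{1/2},N]$, removes the $\tau(n)\le 2$ terms by estimating \emph{prime} exponential sums (partial summation plus Harman's results), and, crucially, never estimates the $h$-th sum in isolation: after H\"older in $h$ it fuses $h$ with the modulus variable ($r=hz$, divisor bound), so the only Diophantine input is one approximation $a/q$ to $\alpha$ with $q=N^{1/2}$, i.e.\ $N=q^2$ along convergents. You instead use Selberg--Beurling detection, bound the $\tau(n)\le 2$ contribution by $2A(N)$ with $A(N)$ controlled by Weyl's inequality (a nice simplification that avoids prime sums entirely), and estimate each $h$-term \emph{pointwise} by applying Theorem \ref{MT1} to the polynomial $hf$. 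The pointwise use costs a factor $H^{\gamma}$ that the paper's $h$-averaging avoids, but your numerology correctly absorbs it because $\gamma<1$ (one needs $H^{1+\gamma}\ll N^{\gamma/2}$, and $\gamma/4<\gamma/(2+2\gamma)$); you also need, and should state, that the implied constant in Theorem \ref{MT1} is uniform in the polynomial, which it is.

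The genuine gap is in the Diophantine step, which you yourself identify as the technical heart but do not actually carry out, and whose sketch is incorrect as stated. You allow $q\in[N^{1/2},N]$ and assert that the best-approximation property gives \enquote{no $0<m<q$ has $\|m\alpha\|\le 1/N$}. That is not a consequence of your constraints: for $0<m<q=q_n$ one only gets
\begin{equation*}
\|m\alpha\|\ \ge\ \|q_{n-1}\alpha\|\ =\ \frac{1}{q_n+q_{n-1}\beta_n},\qquad \beta_n\in(0,1),
\end{equation*}
so your claim requires $N>1/\|q_{n-1}\alpha\|=q_n+q_{n-1}\beta_n$, a condition strictly stronger than $N\ge q$ and not implied by $q\ge N^{1/2}$ together with $|\alpha-a/q|\le 1/(qN)$. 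In the window $q_n\le N\le q_n+q_{n-1}\beta_n$ (which your prescription \enquote{$N$ comparable to $q$ or to $q_{n+1}$} does not exclude, e.g.\ when $q_{n+1}<2q_n$), the integer $m=q_{n-1}$ can satisfy $\|m\alpha\|\le 1/N$, your lower bound $q_h\gg q/H$ collapses, and then Theorem \ref{MT1} applied to $hf$ gives nothing, because for $h$ with $q_h$ bounded the factor $(1/q_h+\cdots)^{\gamma}$ is of order $1$. The fix is to prove that every irrational $\alpha$ admits infinitely many scales $N$ carrying an approximation with, say, $N^{1/2}\le q$ and $N>1/\|q_{n-1}\alpha\|$, i.e.\ that the interval $\bigl(\,1/\|q_{n-1}\alpha\|,\ \min\bigl(q_n^2,\,1/\|q_n\alpha\|\bigr)\,\bigr]$ contains an integer for infinitely many $n$ (splitting on whether $a_{n+1}\ge 2$, and passing to the next convergent when $a_{n+1}=1$ and $a_{n+2}$ is huge). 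This is true, but it is an actual continued-fraction argument, not bookkeeping; until it is supplied, the proof fails precisely at its load-bearing step. The paper's $h$-averaged treatment sidesteps this entirely, which is one concrete advantage of its approach.
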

In addition, we prove the existence of solutions in square free numbers 
\begin{theorem}\label{MT4}
Let $f(x)$ be a real polynomial in $x$ of degree $k$, with irrational leading coefficient. Suppose $\epsilon>0$ is given. Then there are infinitely many solutions of 
\begin{align*}
||f(n)||<n^{-(\gamma/4)+\epsilon},
\end{align*} 
where $n$ is square free and has at least two prime factors.
\end{theorem}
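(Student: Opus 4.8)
The plan is to recast the statement as a lower bound for a weighted counting function and to detect the condition $\|f(n)\|<\delta$ by a finite exponential sum, exactly as one would (and, I expect, as the authors do) for Theorem~\ref{MT3}, but now with the divisor weight replaced by $\mu^2$. Fix $\epsilon>0$, which I may assume to be smaller than $\gamma/4$, and set $\delta=\delta(N)=N^{-\gamma/4+\epsilon}$. Writing
\[
A(N)=\sum_{n\le N}\mu^2(n)\,\mathbf{1}\{\|f(n)\|<\delta\},\qquad
B(N)=\sum_{p\le N}\mathbf{1}\{\|f(p)\|<\delta\},
\]
the square-free integers with at least two prime factors and $\|f(n)\|<\delta$ are counted by $A(N)-B(N)+O(1)$, the $O(1)$ accounting for $n=1$ (which is square-free but has no prime factor and is not subtracted off by $B$). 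It therefore suffices to produce a sequence $N_j\to\infty$ along which $A(N_j)-B(N_j)\to\infty$: since $\delta(N)\to0$, any $n\le N_j$ counted here obeys $\|f(n)\|<\delta(N_j)\le n^{-\gamma/4+\epsilon}$, and a count growing without bound cannot be supported on a fixed finite set of $n$, so this forces infinitely many distinct solutions of the required shape.

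To estimate $A(N)$ I would introduce Beurling--Selberg / Vaaler minorant and majorant functions $\psi^{\pm}$ for the indicator of $(-\delta,\delta)$ modulo one, band-limited to frequencies $|h|\le H$ with $H\asymp\delta^{-1}$ and satisfying $\widehat{\psi^{\pm}}(0)=2\delta+O(H^{-1})$ and $|\widehat{\psi^{\pm}}(h)|\le\min(2\delta,|h|^{-1})$. Applying the minorant gives
\[
A(N)\ge\big(2\delta-H^{-1}\big)\sum_{n\le N}\mu^2(n)+\sum_{1\le|h|\le H}\widehat{\psi^-}(h)\,S_h,\qquad S_h=\sum_{n\le N}\mu^2(n)\,e\big(hf(n)\big).
\]
For $H\asymp\delta^{-1}$ the first term is $\gg\delta N$, since $\sum_{n\le N}\mu^2(n)\sim\frac{6}{\pi^2}N$. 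For $h\neq0$ the sum $S_h$ carries the $\mu^2$ weight against the polynomial $hf$, whose leading coefficient is $h\alpha$, so Theorem~\ref{MT2} applies. I would select the sequence $N_j$ from the continued fraction expansion of the irrational $\alpha$ so that the Dirichlet denominator $q=q(N_j)$ attached to $\alpha$ with $P=N_j$ lands in the minor-arc range $(N_j^{1/2},N_j]$; this is possible for infinitely many $N_j$ precisely because $\alpha$ is irrational. The Corollary to Theorem~\ref{MT2} then yields the saving $S_1\ll N^{1+\epsilon'-\gamma/2}$, and the analogous per-frequency bound (see the obstacle below) makes $\sum_{1\le|h|\le H}|\widehat{\psi^-}(h)|\,|S_h|\ll(\log N)\max_{h}|S_h|$ smaller than the main term once $\epsilon'$ is chosen small, so $A(N_j)\gg\delta N_j$.

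For $B(N)$ the majorant gives $B(N)\le(2\delta+H^{-1})\,\pi(N)+\sum_{1\le|h|\le H}\widehat{\psi^+}(h)\sum_{p\le N}e(hf(p))$. Here the diagonal contributes $\ll\delta N/\log N$, a factor $\log N$ smaller than the main term of $A(N)$; this is the decisive gain, and it is why primes (square-free numbers with a single prime factor) cannot absorb the bulk of the count. The off-diagonal is controlled by the minor-arc estimate for exponential sums over primes underlying Harman's Theorem~2 \cite{HAR}, which gives $\sum_{p\le N}e(hf(p))=o(N)$ uniformly for $1\le h\le H$, whence $B(N)\ll\delta N/\log N=o(\delta N)$. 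Combining the two estimates yields $A(N_j)-B(N_j)\gg\delta N_j\to\infty$, which proves the theorem.

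The main obstacle is the uniform control of $S_h$ over the full range $1\le h\le H$. Theorem~\ref{MT2} is driven by a rational approximation to the leading coefficient of the phase, here $h\alpha$, and reducing $ha/q$ to lowest terms replaces $q$ by $q/(h,q)$, so the quality of approximation, and hence the saving, degrades as $h$ grows. The saving grace is that $H\asymp\delta^{-1}=N^{\gamma/4-\epsilon}$ is far below the minor-arc threshold $N^{1/2}$ (as $\gamma\le1$), so for every $h\le H$ one still has $q/(h,q)\ge q/H\gg N^{1/2-\gamma/4}$ and the per-frequency saving survives; making this uniform, and checking that the accumulated losses together with the prime exponential sum estimate still leave the main term $\asymp\delta N$ dominant, is the delicate bookkeeping on which the stated exponent $\gamma/4$ rests. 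A secondary technical point is the selection of the sequence $N_j$ placing $\alpha$ in $\mathfrak{m}$ for all relevant frequencies at once, which must be extracted from the growth of the convergent denominators of $\alpha$.
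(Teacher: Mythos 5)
Your overall architecture (detecting $\|f(n)\|<\delta$ with harmonics $h\le H\asymp\delta^{-1}$, bounding the $\mu^2$-weighted exponential sums, subtracting the prime contribution, and choosing $N_j$ through the convergents of $\alpha$ with the final exponent $\gamma/4$) is the same as the paper's; the paper merely uses the Baker--Harman detection device (Lemma \ref{lem-h3}, with the set $\mathcal{A}=\{N^{1/2}<n\le N:(n,P(N^{1/2}))>1\}$) where you use a Beurling--Selberg minorant/majorant, and these are interchangeable here. The genuine gap is the step you flag as the ``main obstacle'' and then dismiss: applying Theorem \ref{MT2} per frequency to the polynomial $hf$. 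Your ``saving grace'' paragraph tracks only the denominator, $q\mapsto q/(h,q)$, but not the \emph{quality} of approximation: from $|\alpha-a/q|\le 1/(qP)$ one gets only $|h\alpha-a'/q'|\le h/(qP)=1/\bigl(q'\cdot dP/h\bigr)$ with $d=(h,q)$, $q'=q/d$, so the admissible parameter for $h\alpha$ is $P'=dP/h$, and the hypothesis $q'\le P'$ of Theorem \ref{MT2} becomes $hq\le d^2P$. For the pairs $(q,P)$ actually guaranteed for a general irrational $\alpha$ --- consecutive convergent denominators $(q_j,q_{j+1})$, where e.g.\ bounded partial quotients force $P=q_{j+1}\asymp q_j=q$ --- this fails for every $h\ge 2$ with $(h,q)=1$; equally, the Weyl-type lemmas underneath, which need $|\beta-b/r|\le 1/r^2$, are not available for $\beta=h\alpha$, $r=q'$, since the error is only $h/q^2$. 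This is exactly why the paper does \emph{not} argue per frequency: it re-runs the proof of Theorem \ref{MT2} with the sum over $h$ kept outside, applies H\"older in $h$, and then in the output of Lemma \ref{lem-h1} clubs $r=hz$ into a single variable with a divisor-function weight, so that $\sum_{h\le H}\sum_{z\le Y}\min\bigl(W,1/\|\alpha hz\|\bigr)$ is estimated using the approximation to $\alpha$ alone (gaining, in addition, the factor $H$ in the term $q/(HN^k)$).

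The gap is repairable without the paper's averaging trick, but it needs an argument you did not supply: apply Dirichlet's theorem afresh to $h\alpha$ with parameter $2N^{1/2}$, and use Farey spacing ($|a'/q'-b/r|\ge 1/(q'r)$ for distinct fractions) together with $|h\alpha-a'/q'|\le h/N$ to show the new denominator satisfies $r\gg N^{1/2}/h\ge N^{1/2-\gamma/4}$ with quality $\le 1/r^2$; only then does your per-frequency saving hold, after which your bookkeeping ($\gamma^2/4<\gamma/4$ since $\gamma<1$) is fine. Separately, your treatment of $B(N)$ is a non sequitur as written: a uniform bound $\sum_{p\le N}e(hf(p))=o(N)$ is far too weak, since summing over $h\le H$ with weights $\min(2\delta,1/h)$ gives only $o(N\log N)$, which swamps the main term $\asymp\delta N$. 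You need the same quantitative power saving per harmonic (of the shape $N^{1-\gamma/2+\gamma^2/4+\epsilon'}$, from Harman's minor-arc estimate, with the same caveat about approximating $h\alpha$) that you invoked for $A(N)$; the paper treats the $\mu^2$ sum and the prime sums \eqref{eq2}, \eqref{eq3} uniformly by the same averaged-over-$h$ machinery.
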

\section{notation and preliminaries} 
Throughout the article, we consider the following notations:
\begin{itemize}
\item $f=O(g)$ or $f\ll g$ for $f\leq c\cdot g$, where $c$ is a constant which may sometimes depend upon some $\epsilon.$ We denote $x\sim X$ for the dyadic interval $X\leq x<2X$. 
\item $\mathbb{P}$ denotes set of all primes.
\item $\tau(n)$ denotes the divisor function defined as 
\begin{align*}
\tau(n)=\sum_{d|n}1.
\end{align*}
\item $\mu(n)$ denotes the M\"{o}bius function defined as 
\begin{align*}
\mu(1)=1 
\end{align*}
and if $n>1$ and $n=p_1^{a_1}\cdots p_k^{a_k}$, then 
\begin{align*}
\mu(n)=\begin{cases}
(-1)^k, &~\text{if}~a_1=\cdots=a_k=1;\\
0, &~\text{otherwise}.
\end{cases}
\end{align*}
\item We set,
$$P(z):= \prod_{\substack{p\in \mathbb{P}\\ p<z}}p.$$

\item Let $a$ and $b$ be two arithmetical functions. Their dirichlet product denoted as $a*b$ is defined by 
\begin{align*}
a*b(n)=\sum_{d|n}a(d)b(n/d).
\end{align*} 
\item For any arithmetical function $a$, $a^{-1}$ denotes the inverse function such that $a*a^{-1}(n)=[\frac{1}{n}]$  for all $n$ where $[\bullet]$ denotes box function.
\item We have $1(n)=1$.
\item $\tau_3(n)=1*1*1(n)$.
\end{itemize} 
Thoroughout the article $p$ always denotes prime number.
The following result of Vaughan \cite{RCV} plays a crucial role in the proof of our result. 
\begin{lemma}
For any real valued function $f$ and natural number $N$, we have 
\begin{align*}
\sum_{p\leq N}(\log p)e(f(p))=O(N^{1/2})+S_1-S_2-S_3,
\end{align*}
where 
\begin{align*}
S_1&=\sum_{d\leq N^{1/3}}\mu(d)\sum_{l<Nd^{-1}}(\log l)e(f(dl)),\\
S_2&=\sum_{r\leq N^{2/3}}\phi_1(r)\sum_{m\leq Nr^{-1}}e(f(mr)),\\
S_3&=\sum_{N^{1/3}<m\leq N^{2/3}}\phi_2(m)\sum_{N^{1/3}<n\leq Nm^{-1}}\Lambda(n)e(f(mn)),
\end{align*}
and $\phi_1(r)\ll \log r$ and $\phi_2(m)\ll \tau(d)$.
\end{lemma}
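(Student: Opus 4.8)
The plan is to read this lemma as Vaughan's identity for the prime phase sum, carried out with truncation level $N^{1/3}$, and to prove it in two stages: first replace the sum over primes by a sum over the von Mangoldt function $\Lambda$, then decompose $\Lambda$ and collect Dirichlet coefficients into the three sums $S_1,S_2,S_3$.

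For the first stage I would compare $\sum_{p\le N}(\log p)e(f(p))$ with $\sum_{n\le N}\Lambda(n)e(f(n))$. Since $\Lambda(n)=\log p$ precisely when $n=p^k$, the two sums differ only over the proper prime powers $p^k$ with $k\ge 2$, whose total weight $\sum_{k\ge 2}\sum_{p\le N^{1/k}}\log p$ is $\ll N^{1/2}$ by Chebyshev's estimate (the term $k=2$ dominating). This produces the $O(N^{1/2})$ term and reduces the lemma to decomposing $\sum_{n\le N}\Lambda(n)e(f(n))$.

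For the second stage I would set $F(s)=\sum_{b\le N^{1/3}}\mu(b)b^{-s}$ and $G(s)=\sum_{c\le N^{1/3}}\Lambda(c)c^{-s}$, the truncations of $1/\zeta$ and $-\zeta'/\zeta$, and use the formal Dirichlet-series identity
\[
-\frac{\zeta'}{\zeta}=-\zeta'F+G-\zeta FG+\left(-\frac{\zeta'}{\zeta}-G\right)(1-\zeta F),
\]
which is verified by simply expanding the right-hand side. Comparing $n$-th coefficients gives a pointwise formula for $\Lambda(n)$. Substituting into the sum, the stand-alone $G$-term contributes only $\sum_{n\le N^{1/3}}\Lambda(n)e(f(n))\ll N^{1/3}$, absorbed into $O(N^{1/2})$. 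The coefficient of $-\zeta'F$ is $\sum_{dl=n,\,d\le N^{1/3}}\mu(d)\log l$, which after reversing the order of summation is exactly $S_1$. The coefficient of $-\zeta FG$, on grouping $r=bd\le N^{2/3}$ with $\phi_1(r)=\sum_{bd=r,\,b,d\le N^{1/3}}\mu(b)\Lambda(d)$, gives $S_2$; here $|\phi_1(r)|\le\sum_{d\mid r}\Lambda(d)=\log r$, so $\phi_1\ll\log r$. Finally the Type II term $(-\zeta'/\zeta-G)(1-\zeta F)$ has first factor supported on $n>N^{1/3}$ with coefficient $\Lambda(n)$ and second factor supported on $m>N^{1/3}$ with coefficient $-\sum_{b\mid m,\,b\le N^{1/3}}\mu(b)$; taking $\phi_2(m)=\sum_{b\mid m,\,b\le N^{1/3}}\mu(b)$ yields $-S_3$, matching the sign in the statement, and $|\phi_2(m)|\le\tau(m)$ is immediate.

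The step demanding the most care is the bookkeeping of the ranges in the bilinear term $S_3$: one must check that the coefficients of $1-\zeta F$ and of $-\zeta'/\zeta-G$ genuinely vanish for $m\le N^{1/3}$ and $n\le N^{1/3}$ respectively (so that no term is double counted with $S_1,S_2$), and that the constraint $mn\le N$ together with $n>N^{1/3}$ forces the outer range $N^{1/3}<m\le N^{2/3}$. Once these supports are confirmed, the divisor-sum bounds $\phi_1\ll\log r$ and $\phi_2\ll\tau(m)$ follow at once from $\Lambda\le\log$ and $|\mu|\le 1$, completing the proof.
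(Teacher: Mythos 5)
Your proposal is correct and follows essentially the same route as the paper, which offers no argument of its own but simply defers to Vaughan's work: the lemma is Vaughan's identity with truncation level $N^{1/3}$, and your two-stage derivation (passing from $\sum_{p\le N}(\log p)e(f(p))$ to $\sum_{n\le N}\Lambda(n)e(f(n))$ at a cost of $O(N^{1/2})$ from prime powers, then expanding $-\zeta'/\zeta=-\zeta'F+G-\zeta FG+(-\zeta'/\zeta-G)(1-\zeta F)$ and sorting coefficients into Type I, Type I$'$, and Type II sums) is precisely the standard proof in the cited reference. Your support and range checks for the bilinear term, and the bounds $|\phi_1(r)|\le\sum_{d\mid r}\Lambda(d)=\log r$ and $|\phi_2(m)|\le\tau(m)$, are all sound (the paper's ``$\tau(d)$'' is a typo for $\tau(m)$).
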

\begin{proof}
For detailed proof, see \cite{RCV}.
\end{proof}
\begin{lemma}\cite[Lemma 10C]{WS}
Let $g(x)$ be a real valued polynomial of degree $k$ with leading coefficient $\beta$. Then for $\epsilon>0,$
\begin{align*}
\left|\sum_{n=1}^{X}e(g(n))\right|^{R}\ll X^{R-k+\epsilon}\sum_{y=1}^{k!X^{k-1}}\min\left(X,\frac{1}{||\beta y||}\right).
\end{align*}  
$R=2^{k-1}$.
\end{lemma}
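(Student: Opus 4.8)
The natural route is classical Weyl differencing. Write $S=\sum_{n=1}^{X}e(g(n))$. The plan is to bound $|S|$ by repeatedly squaring and applying Cauchy--Schwarz, each step lowering the degree of the phase by one, until after $k-1$ steps the exponent is linear in $n$ and the inner sum is a geometric series. First I would record the one-step identity $|S|^2=\sum_{|h|<X}\sum_{n}e\big(g(n+h)-g(n)\big)$, where the inner sum runs over the integers $n$ with $n,n+h\in[1,X]$; the key observation is that $g(n+h)-g(n)$ is a polynomial in $n$ of degree $k-1$ with leading coefficient $\beta k h$, so differencing trades one degree for one new variable $h$ ranging over at most $2X$ values.

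Iterating, I would introduce differences $h_1,\dots,h_{k-1}$ and, at each squaring, apply Cauchy--Schwarz to pull the $h$-sums outside the modulus. Setting $U_j=\sum_{h_1,\dots,h_j}\big|\sum_n e(\Delta_{h_1,\dots,h_j}g(n))\big|$, the resulting recursion has the shape $|S|^{2^j}\ll X^{a_j}U_j$ with $a_j=2a_{j-1}+(j-1)$ and $a_1=0$, which solves to $a_j=2^j-j-1$. At $j=k-1$ this yields the exponent $a_{k-1}=R-k$, and the fully differenced phase $\Delta_{h_1,\dots,h_{k-1}}g(n)$ is linear in $n$ with leading coefficient $\beta\, k!\, h_1\cdots h_{k-1}$. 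Bounding the inner linear sum by the standard geometric-series estimate $\big|\sum_{n\in I}e(\theta n+c)\big|\le\min\big(X,\tfrac12\|\theta\|^{-1}\big)$ then gives
\[
|S|^R\ll X^{R-k}\sum_{|h_1|,\dots,|h_{k-1}|<X}\min\Big(X,\frac{1}{\|\beta\, k!\, h_1\cdots h_{k-1}\|}\Big).
\]

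To reach the stated form I would reparametrize the multiple sum by the product $y=|h_1\cdots h_{k-1}|$, which ranges over $[1,X^{k-1}]$; absorbing the constant $k!$ into the range replaces $\|\beta k! y\|$ by $\|\beta y\|$ with $y\le k!X^{k-1}$. For a fixed nonzero value of the product the number of admissible tuples is the $(k-1)$-fold divisor function, which is $\ll y^{\epsilon}\ll X^{\epsilon}$; this produces the claimed $X^{\epsilon}$. The degenerate tuples in which some $h_i=0$ contribute at most $\ll X^{k-1}$ to the sum, and this is harmless because $\sum_{y=1}^{k!X^{k-1}}\min(X,\|\beta y\|^{-1})\ge k!X^{k-1}$, since every summand is at least $1$.

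I expect the main obstacle to be bookkeeping rather than any single hard idea: one must track the exponent of $X$ faithfully through the $k-1$ Cauchy--Schwarz steps (the recursion for $a_j$), verify that the iterated leading coefficient is exactly $\beta\, k!\, h_1\cdots h_{k-1}$, and carry out the reparametrization cleanly so that the divisor bound and the degenerate diagonal are both absorbed into the single factor $X^{\epsilon}$.
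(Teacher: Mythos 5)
The paper gives no proof of this lemma---it is quoted verbatim from Schmidt's monograph---and your proposal correctly reproduces the standard Weyl-differencing argument behind it: the squaring recursion $a_j=2a_{j-1}+(j-1)$ solving to $a_{k-1}=R-k$, the iterated leading coefficient $\beta\,k!\,h_1\cdots h_{k-1}$, the geometric-series bound $\min\left(X,\tfrac{1}{2}\|\theta\|^{-1}\right)$, the divisor-bound reparametrization $y=k!\,|h_1\cdots h_{k-1}|$ producing the factor $X^{\epsilon}$, and the absorption of the degenerate tuples via the observation that each summand $\min\left(X,\|\beta y\|^{-1}\right)$ is at least $1$. This is essentially the same approach as the cited source, so nothing further is needed.
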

The next two lemmas are due to Harman from his works on trigonometric sums over primes. 
\begin{lemma}\cite[Lemma 3]{HAR}\label{lem-h1}
Let $f$ be a polynomial which satisfies the conditions of the Theorem \ref{MT1}. Suppose $\epsilon>0$, and that $\phi(u)$ and $\psi(v)$ are real functions. Put
\begin{align*}
T&=\max |\psi(v)|,\\
F&=\left(\left(\sum_{u\leq W}\phi(u)\right)\right)^{1/2}.
\end{align*}
For positive integers $M$, $W$ and $X$, write
\begin{align*}
S=\sum_{u=1}^{W}\sum_{\substack{v=1\\ uv\leq M}}^{X}\phi(u)\psi(v)e(f(uv)).
\end{align*}
Then we have 
\begin{align*}
\left(\frac{S}{FT}\right)^{R^2}\ll (WX)^{R^2}\left(X^{-R}+(WX)^{-k+\epsilon}\sum_{z=1}^{Y}\min \left(W, \frac{1}{||\alpha z||}\right)\right),
\end{align*}
where $Y=X^kW^{k-1}(k!)^2$ and $R=2^{k-1}$.
\end{lemma}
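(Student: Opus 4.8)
The plan is to collapse the bilinear sum $S$ into an ordinary one-variable polynomial sum, to which the Weyl-type estimate of \cite[Lemma 10C]{WS} applies, and to shed the two weights $\phi$ and $\psi$ as the normalising factors $F$ and $T$. I would first reorganise $S=\sum_{v\le X}\psi(v)H(v)$, where $H(v)=\sum_{u\le W,\,uv\le M}\phi(u)\,e(f(uv))$. The decisive observation is that for each fixed $v$ the phase $f(uv)$, viewed as a polynomial in the summation variable $u$, has degree $k$ and leading coefficient $\alpha v^{k}$; this is precisely why the final estimate is truncated at the length $W$ of the $u$-range and ultimately involves $\|\alpha z\|$.

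Next I would extract the weights. Since $T=\max_{v}|\psi(v)|$ and there are at most $X$ admissible values of $v$, Hölder's inequality in the $v$-variable peels off $T$ (together with a controlled power of $X$) and reduces matters to bounding a high moment $\sum_{v\le X}|H(v)|^{R^{2}}$, the exponent $R^{2}=4^{k-1}=\gamma^{-1}$ being dictated by the power that Weyl differencing will generate; the positivity of $\phi$ then lets its contribution be absorbed into $F=(\sum_{u\le W}\phi(u))^{1/2}$ by Cauchy--Schwarz. For the inner sum I would invoke \cite[Lemma 10C]{WS} on $H(v)$ with leading coefficient $\alpha v^{k}$, which delivers the $R$-th power bound with $R=2^{k-1}$; raising that bound to the $R$-th power and smoothing the resulting $y$-sum (via $\min(W,\cdot)^{R}\le W^{R-1}\min(W,\cdot)$ together with Hölder over the at most $k!W^{k-1}$ values of $y$) yields an estimate of the shape
\[
|H(v)|^{R^{2}}\ll F^{R^{2}}\,W^{R^{2}-k+\epsilon}\sum_{y=1}^{k!W^{k-1}}\min\!\left(W,\frac{1}{\|\alpha v^{k}y\|}\right),
\]
in which the Weyl sum now occurs only to the first power.

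It then remains to sum over $v\le X$ and to linearise the double sum $\sum_{v}\sum_{y}\min(W,1/\|\alpha v^{k}y\|)$. Here I would substitute $z=v^{k}y$, observe that $z$ runs over $1\le z\le (k!)^{2}X^{k}W^{k-1}=Y$, and bound the number of representations $z=v^{k}y$ by a divisor-type factor $z^{\epsilon}\ll(WX)^{\epsilon}$, thereby collapsing the double sum into $\sum_{z\le Y}\min(W,1/\|\alpha z\|)$ at the admissible cost of $(WX)^{\epsilon}$. Collecting the powers of $W$ and $X$ then produces the main term $(WX)^{R^{2}-k+\epsilon}\sum_{z\le Y}\min(W,1/\|\alpha z\|)$, while the diagonal contribution, where the differenced phase is constant and no saving is available, accounts for the term $X^{-R}$ once everything is normalised by $(FT)^{R^{2}}(WX)^{R^{2}}$.

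The principal obstacles are twofold. First, the weight bookkeeping must be arranged so that $\phi$ and $\psi$ emerge in exactly the combination $(FT)^{R^{2}}$, with no stray powers of $W$, of $X$, or of $\max_{u}\phi(u)$ surviving; this is delicate because one must keep the surviving pure exponential sum in the $u$-variable (so that the truncation sits at $W$ and the leading coefficient is $\alpha v^{k}$) while still routing the $\phi$-weight into $F$ rather than into the Weyl sum. Second, the linearisation $z=v^{k}y$ must preserve both the truncation at $W$ and the cancellation present in $\sum_{z}\min(W,1/\|\alpha z\|)$; verifying that the enlargement of the range to $Y$ and the multiplicities are genuinely absorbed into the $\epsilon$-power, rather than overwhelming the estimate, is the point at which the argument demands the most care.
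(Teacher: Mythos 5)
First, a point of order: the paper itself contains no proof of this lemma --- it is quoted, with attribution, from Harman \cite[Lemma 3]{HAR} --- so your proposal has to stand entirely on its own, and it does not. The fatal step is exactly the one you flag as ``delicate'': you cannot invoke Schmidt's Lemma 10C on $H(v)=\sum_{u\le W}\phi(u)e(f(uv))$, because that lemma concerns pure exponential sums, and no application of Cauchy--Schwarz both strips off $\phi$ and leaves a pure single sum over $u$ standing. Cauchy--Schwarz applied to $H(v)$ either yields the trivial bound (no oscillation retained), or, if you expand the square to keep the oscillation, produces the double sum $\sum_{u_1,u_2}\phi(u_1)\phi(u_2)e\left(f(u_1v)-f(u_2v)\right)$, in which the weights are still present and the phases are now differenced. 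Your displayed intermediate inequality cannot be repaired either: with the paper's normalisation $F^2=\sum_{u\le W}\phi(u)$ it is false by homogeneity (replace $\phi$ by $c\phi$: the left side scales like $c^{R^2}$, the right like $c^{R^2/2}$), while with the $\ell^2$-normalisation $F^2=\sum_{u\le W}\phi(u)^2$ (which is what Harman's original statement uses; the paper's transcription of $F$ is garbled) it is true but vacuous, since $|H(v)|\le FW^{1/2}$ and $\sum_{y\le k!W^{k-1}}\min\left(W,\frac{1}{||\alpha v^ky||}\right)\gg W^{k-1}$ already imply it; a vacuous bound cannot carry the cancellation you need.

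Second, even granting that inequality, your bookkeeping does not reach the stated estimate. H\"older in $v$ costs $X^{R^2-1}$, so your route ends at
\begin{align*}
\left(\frac{S}{FT}\right)^{R^2}\ll X^{R^2-1}W^{R^2-k+\epsilon}\sum_{z=1}^{Y}\min\left(W,\frac{1}{||\alpha z||}\right)
=(WX)^{R^2}\cdot X^{k-1}\cdot (WX)^{-k+\epsilon}\sum_{z=1}^{Y}\min\left(W,\frac{1}{||\alpha z||}\right),
\end{align*}
which falls short of the lemma by the factor $X^{k-1}\ge X$; this loss would destroy the Corollary and every application in the paper. The shortfall is structural: by freezing $v$ and using only the modulus of a Weyl bound for each individual $v$, you discard the averaging over $v$. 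The shape of the stated bound --- the symmetric factor $(WX)^{-k}$, the range $Y\asymp X^kW^{k-1}$, and above all the diagonal term $X^{-R}$, which in your framework has no origin since you never difference over pairs of $v$'s --- all signal that $v$ must participate in the differencing. The correct route is: Cauchy--Schwarz in $u$ to remove $\phi$ (this is where $F$ is born, and it is the only place a weight can be discarded on average), giving $|S|^2\le F^2T^2\sum_{v_1,v_2\le X}\left|\sum_{u}e\left(f(uv_1)-f(uv_2)\right)\right|$; the diagonal $v_1=v_2$ contributes at most $XW$ and, after raising to the power $R^2/2$ by H\"older, is absorbed into $(WX)^{R^2}X^{-R}$; for $v_1\ne v_2$ the inner sum is a pure sum over $u$ of length at most $W$ whose phase has degree $k$ and leading coefficient $\alpha\left(v_1^k-v_2^k\right)$, to which Schmidt's lemma genuinely applies; your linearisation and divisor-multiplicity argument are then run with $z=\left|v_1^k-v_2^k\right|y\le Y$ rather than $z=v^ky$. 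Those last two ingredients of yours are fine; they are simply attached to the wrong decomposition.
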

As a direct corollary he got
\begin{cor}\cite[Corollary]{HAR}
    Let $a,q$ be as in Theorem \ref{MT1}. If $T=o(X^{\delta})$ and $F=o(X^{\delta})$ for every $\delta >0$, then
    \begin{align*}
        S\ll (XW)^{1+\epsilon}\left(X^{-R}+W^{-1}+q^{-1}+(XW)^{-k}q\right)^{\gamma}.
    \end{align*}
\end{cor}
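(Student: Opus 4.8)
The plan is to deduce the Corollary directly from Lemma \ref{lem-h1} by two moves: first, discarding the weight factor $FT$, and second, converting the Weyl-type sum $\sum_{z=1}^{Y}\min(W,1/\|\alpha z\|)$ into the explicit $q$-dependent shape on the right-hand side. Since $T=o(X^{\delta})$ and $F=o(X^{\delta})$ for every $\delta>0$, the product $FT$ is itself $o(X^{\delta})$ for every $\delta>0$, hence $FT\ll (XW)^{\epsilon}$; so once the bracketed quantity in Lemma \ref{lem-h1} is understood, this factor can simply be absorbed into the final $(XW)^{\epsilon}$. Thus the whole problem reduces to estimating $\sum_{z=1}^{Y}\min(W,1/\|\alpha z\|)$ with $Y=X^{k}W^{k-1}(k!)^2$ and then doing the arithmetic bookkeeping.

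The key analytic input is the standard minor-arc estimate: if $(a,q)=1$ and $|\alpha-a/q|\le q^{-2}$, then
\[
\sum_{z=1}^{Y}\min\left(W,\frac{1}{\|\alpha z\|}\right)\ll \left(\frac{Y}{q}+1\right)\left(W+q\log q\right).
\]
I would first note that the hypothesis of Theorem \ref{MT1}, namely $|\alpha-a/q|\le 1/(qP)$ with $1\le q\le P$, gives $|\alpha-a/q|\le q^{-2}$, so the estimate applies; note also that $P$ enters only here and drops out of the final bound. To establish the estimate I would partition $\{1,\dots,Y\}$ into $O(Y/q+1)$ blocks of $q$ consecutive integers; on each block the residues $az\bmod q$ run through a complete residue system, so the points $\alpha z\bmod 1$ lie within $1/q$ of an arithmetic progression of spacing $1/q$. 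Binning these points by the size of $\|\alpha z\|$ then shows that the $O(1)$ terms nearest an integer contribute $O(W)$, while the remaining terms contribute $\sum_{m\ge 1} q/m\ll q\log q$, giving the per-block bound $W+q\log q$.

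Finally I would carry out the bookkeeping. Substituting $Y=X^{k}W^{k-1}(k!)^2$ and absorbing $\log q$ and the constant $(k!)^2$ into $(XW)^{\epsilon}$, the product $(Y/q+1)(W+q\log q)$ expands into the four terms $X^{k}W^{k}/q$, $X^{k}W^{k-1}$, $W$, and $q$; multiplying by $(XW)^{-k+\epsilon}$ turns these respectively into $q^{-1}$, $W^{-1}$, $W^{1-k}X^{-k}$, and $(XW)^{-k}q$. Since $k\ge 2$ one has $W^{1-k}X^{-k}\le W^{-1}$, so the stray third term is dominated by $W^{-1}$; combined with the separate $X^{-R}$ term already present in Lemma \ref{lem-h1}, the whole bracket becomes $\ll (XW)^{\epsilon}\bigl(X^{-R}+W^{-1}+q^{-1}+(XW)^{-k}q\bigr)$. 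Plugging this back, taking $R^{2}$-th roots, and using $1/R^{2}=4^{1-k}=\gamma$ together with $FT\ll (XW)^{\epsilon}$ yields precisely the claimed bound after relabelling $\epsilon$. The only genuinely delicate point is the minor-arc sum estimate, together with the verification that $W^{1-k}X^{-k}$ is absorbed by $W^{-1}$ using $k\ge 2$; everything else is routine rearrangement and bookkeeping of the various $\epsilon$-powers.
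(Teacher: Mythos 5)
Your proposal is correct and is essentially the argument behind the paper's citation: the paper gives no proof of this corollary (it simply quotes Harman's Corollary after Lemma \ref{lem-h1}), and Harman's own deduction is exactly what you wrote — apply Lemma \ref{lem-h1}, insert the classical estimate $\sum_{z\le Y}\min\left(W,\|\alpha z\|^{-1}\right)\ll (Y/q+1)(W+q\log q)$ valid for $|\alpha-a/q|\le q^{-2}$, and carry out the bookkeeping, with the term $W^{1-k}X^{-k}$ absorbed into $W^{-1}$ since $k\ge 2$. The only point you leave implicit is that absorbing $\log q$ into $(XW)^{\epsilon}$ requires $q\ll(XW)^{k}$; this is harmless, because when $q>(XW)^{k}$ the factor $\left((XW)^{-k}q\right)^{\gamma}\ge 1$ makes the stated bound weaker than the trivial estimate $S\ll FT\cdot XW^{1/2}\ll (XW)^{1+\epsilon}$, so the corollary holds in that range anyway.
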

\begin{lemma}\cite[Lemma 4]{HAR}\label{lem-h2}
Suppose we have the hypothesis of the last lemma and its corollary, but either 
\begin{align*}
\phi(x)&=1~~\text{or}~~\phi(x)=\log x, 
\end{align*}
for all $x$. Then we have 
\begin{align*}
S\ll (XW)^{1+\epsilon}X^{(k-1)/R}\left(q^{-1}+q(WX)^{-k}+W^{-1}\right)^{1/R}.
\end{align*}
\end{lemma}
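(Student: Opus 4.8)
The plan is to exploit the special shape of $\phi$ in order to avoid the Cauchy--Schwarz step that is forced on us by a general $\phi$ in Lemma \ref{lem-h1}, and instead to apply the Weyl estimate directly to the inner sum in the variable that carries the trivial weight. Taking $v$ as the outer variable, write
$$S=\sum_{v\le X}\psi(v)\sum_{\substack{u\le W\\ uv\le M}}\phi(u)\,e(f(uv)).$$
First I would discard $\psi$ by the trivial bound $|\psi(v)|\le T$, giving $|S|\le T\sum_{v\le X}\big|\sum_{u}\phi(u)e(f(uv))\big|$. When $\phi(u)=1$ the inner sum is already unweighted; when $\phi(u)=\log u$ a single partial summation reduces it, at the cost of a factor $\log W\ll(XW)^{\epsilon}$, to the unweighted initial segments $\sum_{u\le t}e(f(uv))$ with $t\le W$. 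Thus in either case it suffices to control, uniformly in $t\le W$, the unweighted exponential sum of the degree-$k$ polynomial $g_{v}(u):=f(uv)$ in $u$, whose leading coefficient is $\alpha v^{k}$.

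Second I would pull the eventual $R$-th power outside the $v$-sum by H\"older's inequality, $\sum_{v\le X}|\cdots|\le X^{1-1/R}\big(\sum_{v\le X}|\cdots|^{R}\big)^{1/R}$, and then apply the Weyl estimate \cite[Lemma 10C]{WS} with range $W$ and leading coefficient $\alpha v^{k}$ to each inner sum: for every $v$ and every $t\le W$,
$$\Big|\sum_{u\le t}e(f(uv))\Big|^{R}\ll W^{R-k+\epsilon}\sum_{y=1}^{k!W^{k-1}}\min\Big(W,\frac{1}{\|\alpha v^{k}y\|}\Big),$$
the bound being uniform in $t$ so that it survives the maximum introduced in the $\phi=\log$ case. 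Summing over $v$ produces the double sum $\sum_{v\le X}\sum_{y\le k!W^{k-1}}\min(W,1/\|\alpha v^{k}y\|)$. The key manipulation is the substitution $z=v^{k}y$: since every integer $z\le Y':=k!\,X^{k}W^{k-1}$ has at most $z^{\epsilon}\ll(XW)^{\epsilon}$ representations of this form, the double sum is $\ll(XW)^{\epsilon}\sum_{z\le Y'}\min(W,1/\|\alpha z\|)$, which crucially restores the \emph{single} modulus $\alpha z$ and brings the leading coefficient back to $\alpha$ alone.

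Third I would invoke the standard Diophantine estimate: because $|\alpha-a/q|\le 1/(qP)\le 1/q^{2}$ with $(a,q)=1$, one has $\sum_{z\le Y'}\min(W,1/\|\alpha z\|)\ll(XW)^{\epsilon}(Y'/q+1)(W+q\log q)$, which expands to $(XW)^{\epsilon}(X^{k}W^{k}q^{-1}+X^{k}W^{k-1}+W+q)$. Feeding this back through the factors $X^{1-1/R}$ from H\"older, $W^{(R-k)/R}$ from Weyl and $T\ll(XW)^{\epsilon}$, and taking the $1/R$-th power, the four resulting terms collapse onto the three terms of the target: the $X^{k}W^{k}q^{-1}$ term gives $(XW)^{1+\epsilon}X^{(k-1)/R}q^{-1/R}$, the $q$ term gives $(XW)^{1+\epsilon}X^{(k-1)/R}(q(WX)^{-k})^{1/R}$, the $X^{k}W^{k-1}$ term gives $(XW)^{1+\epsilon}X^{(k-1)/R}W^{-1/R}$, and the lone $W$ term is dominated by this last one for $k\ge2$. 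Their sum is exactly the asserted bound.

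I expect the main obstacle to be the bookkeeping around the substitution $z=v^{k}y$ and the reconciliation of these four terms with the three terms of the statement: one must verify that the representation count is genuinely $O((XW)^{\epsilon})$, and check that the H\"older loss of $X^{-1/R}$ is precisely offset by the $X^{k/R}$ created by the leading coefficient $\alpha v^{k}$, so that the net power of $X$ emerges as $X^{(k-1)/R}$ rather than anything larger. By contrast, the $\phi=\log$ case needs only the routine partial-summation estimate above, and discarding $\psi$ is immediate since $T\ll(XW)^{\delta}$ for every $\delta>0$.
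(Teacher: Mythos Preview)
The paper does not itself prove this lemma; it is quoted verbatim from Harman \cite{HAR} and no argument is supplied. Your proposal is correct and is exactly Harman's method --- in fact, the intermediate expression you reach after Weyl, namely a sum over $v\le X$ and $y\le k!W^{k-1}$ of $\min\big(W,\|\alpha v^{k}y\|^{-1}\big)$, appears explicitly in Section~4 of the present paper (just before \eqref{eq-a2}) when the authors unpack this lemma in their application, and their phrase ``clubbing $h,y,v^{k}$ together and using the upper bound of $\tau_{3}$'' is precisely your substitution $z=v^{k}y$ together with a divisor-type count on representations.
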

\begin{lemma}\label{lem-h3}
Let $g(n)$ be a non-negative function for all $n$, $H$ a positive integer and $\alpha_n$ a sequence of real numbers for $n\in \mathcal{A}$, where $\mathcal{A}\subset \mathbb{N}$ and is a finite set. If
\begin{align*}
\sum_{h=1}^{H}\left|\sum_{n\in \mathcal{A}}g(n)e(h\alpha_n)\right|<\frac{1}{6}\sum_{n\in\mathcal{A}}g(n).
\end{align*} 
then there is a solution of $||\alpha_n||<H^{-1},$ with $ n\in\mathcal{A}.$
\end{lemma}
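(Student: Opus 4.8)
The plan is to argue by contradiction. Suppose that $\|\alpha_n\|\ge H^{-1}$ for every $n\in\mathcal{A}$, and write $G=\sum_{n\in\mathcal{A}}g(n)$ and $S_h=\sum_{n\in\mathcal{A}}g(n)e(h\alpha_n)$, so that $S_0=G$ and the hypothesis reads $\sum_{h=1}^{H}|S_h|<G/6$. (If $G=0$ there is nothing to prove, so assume $G>0$.) The idea is to test the weighted point set $\{\alpha_n\}$ against a single real trigonometric polynomial of degree at most $H$ that minorises, from below, the indicator of the forbidden short arc $\|\alpha\|<H^{-1}$; evaluating the resulting weighted sum in two ways will be contradictory.

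Concretely, I would introduce a trigonometric polynomial $K(\alpha)=\sum_{|h|\le H}\widehat{K}(h)e(h\alpha)$ with $\widehat{K}(-h)=\overline{\widehat{K}(h)}$ (so that $K$ is real) enjoying three properties: (i) $K(\alpha)\le \mathbf{1}_{[\|\alpha\|<H^{-1}]}$ for all $\alpha$; (ii) $\widehat{K}(0)>0$; and (iii) $\max_{1\le h\le H}|\widehat{K}(h)|\le 3\,\widehat{K}(0)$. Such a $K$ is exactly a Selberg--Beurling minorant of the indicator of the arc $\|\alpha\|<H^{-1}$, an arc of length $2/H$, by a polynomial of degree $H$. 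The standard construction yields $\widehat{K}(0)=\tfrac{2}{H}-\tfrac{1}{H+1}=\tfrac{H+2}{H(H+1)}>0$ together with the coefficient bound $|\widehat{K}(h)|\le \tfrac{1}{H+1}+\min\!\big(\tfrac{2}{H},\tfrac{1}{\pi h}\big)$, whence $\max_{1\le h\le H}|\widehat{K}(h)|\le \tfrac{3H+2}{H(H+1)}<3\,\widehat{K}(0)$, giving (iii). I expect the construction and the verification of this coefficient bound to be the \emph{main obstacle}. The naive candidate, the Fej\'er kernel $F_{H+1}$, fails property (i): although its mass is concentrated near the origin, its side lobes are of size $\asymp H$ on the region $\|\alpha\|\ge H^{-1}$, far too large to be dominated by the indicator. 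The minorant is precisely the device that keeps the kernel genuinely non-positive away from the origin while still constraining all Fourier coefficients at the critical resolution scale $1/H$.

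Granting such a $K$, expand $\sum_{n\in\mathcal{A}}g(n)K(\alpha_n)=\widehat{K}(0)\,G+2\,\mathrm{Re}\sum_{h=1}^{H}\widehat{K}(h)\,S_h$. On one hand, since $g\ge0$ and, by property (i) together with the assumption that no $\alpha_n$ lies in the arc $\|\alpha\|<H^{-1}$, each term satisfies $g(n)K(\alpha_n)\le g(n)\cdot 0=0$, so $\sum_{n\in\mathcal{A}}g(n)K(\alpha_n)\le 0$. On the other hand, by (iii) and the hypothesis, $2\,\mathrm{Re}\sum_{h=1}^{H}\widehat{K}(h)S_h\ge -2\max_h|\widehat{K}(h)|\sum_{h=1}^{H}|S_h|>-2\cdot 3\widehat{K}(0)\cdot\tfrac{G}{6}=-\widehat{K}(0)\,G$, so $\sum_{n\in\mathcal{A}}g(n)K(\alpha_n)>\widehat{K}(0)G-\widehat{K}(0)G=0$. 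These two conclusions are incompatible, so the assumption was false and a solution of $\|\alpha_n\|<H^{-1}$ with $n\in\mathcal{A}$ must exist. The constant $1/6$ in the hypothesis is exactly what is needed to absorb the factor $2$ coming from the $\pm h$ symmetry of $K$ and the factor $3$ in the coefficient ratio (iii).
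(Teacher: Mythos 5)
Your proof is correct, but note that the paper does not actually prove this lemma at all: its ``proof'' is a one-line deferral to Baker and Harman \cite{BH-II}, where a statement of this type (with the same constant $\tfrac{1}{6}$) is established. Your argument is therefore a genuinely different contribution: a self-contained proof by contradiction via a Beurling--Selberg minorant $K$ of degree at most $H$ for the arc $\|\alpha\|<H^{-1}$. The skeleton is sound: under the contradiction hypothesis $\|\alpha_n\|\ge H^{-1}$ for all $n\in\mathcal{A}$ one gets $\sum_{n}g(n)K(\alpha_n)\le 0$, while the hypothesis of the lemma together with $\max_{1\le h\le H}|\widehat{K}(h)|\le 3\widehat{K}(0)$ forces
\begin{equation*}
\sum_{n\in\mathcal{A}}g(n)K(\alpha_n)=\widehat{K}(0)G+2\,\mathrm{Re}\sum_{h=1}^{H}\widehat{K}(h)S_h>\widehat{K}(0)G-6\widehat{K}(0)\cdot\tfrac{G}{6}=0,
\end{equation*}
a contradiction; and your bookkeeping $\widehat{K}(0)=\tfrac{2}{H}-\tfrac{1}{H+1}=\tfrac{H+2}{H(H+1)}$ and $|\widehat{K}(h)|\le\tfrac{1}{H+1}+\tfrac{2}{H}=\tfrac{3H+2}{H(H+1)}<3\widehat{K}(0)$ is exactly the standard statement of Vaaler's theorem (the coefficients of the Selberg minorant differ from those of the indicator by at most $\tfrac{1}{N+1}$), so invoking it without proof is legitimate, though strictly speaking your write-up also rests on a citation --- just to a cleaner and more classical one. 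Your diagnosis that the plain Fej\'er kernel fails because of its $\asymp H$ side lobes is accurate and explains why a genuine band-limited minorant is unavoidable, and your observation that $2\cdot 3\cdot\tfrac{1}{6}=1$ demystifies the constant $\tfrac{1}{6}$, which the paper's citation leaves opaque. Two small points to tidy, neither a gap: the Selberg construction minorises the indicator of the \emph{closed} arc, so to conclude $K(\alpha_n)\le 0$ when $\|\alpha_n\|=H^{-1}$ exactly, note that $K\le 0$ on the open complement and hence, by continuity of the trigonometric polynomial $K$, also at the endpoints; and the strictness of the final inequality requires $\widehat{K}(0)G>0$, which holds because the hypothesis $\sum_{h=1}^{H}|S_h|<G/6$ already forces $G>0$ (if $G=0$ the hypothesis cannot hold and the lemma is vacuous).
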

\begin{proof}
    The proof can be easily derived from  \cite{BH-II}. 
\end{proof}
Now we prove a few lemmas which play an integral part in the prove of our main theorem. 
\begin{lemma}\label{Vaughanlike}
Let $a, b, c$ be three arithmetical functions related by
$$
a(n)=b*c(n)
$$
Then for any $U, V >0$ real numbers,
\begin{align*}
a(n) = a(n)1_{[1,U]}(n) +&\sum_{\substack{lm=n\\ l\leq V}}b(l)c(m)\\
 -&\sum_{\substack{lm=n\\l\leq UV}}\sum_{\substack{rs=l\\r\leq V\\s\leq U}} a(s)b(r)b^{-1}(m)\\ -&\sum_{\substack{lm=n\\l>U\\m>V}}a(l)\sum_{\substack{r|m\\r\leq V}}b(r)b^{-1}(\frac{m}{r}).
\end{align*}
\end{lemma}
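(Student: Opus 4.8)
The plan is to establish the stated formula as an identity in the ring of arithmetical functions under Dirichlet convolution, so that no analytic input is needed: only associativity and commutativity of $*$, the defining relation $a=b*c$, and careful bookkeeping of the truncation ranges. Throughout write $\delta=[\tfrac{1}{\cdot}]$ for the convolution identity and, for a real parameter, let $b_V(n)=b(n)1_{[1,V]}(n)$, $a_U(n)=a(n)1_{[1,U]}(n)$, and $b_{>V}=b-b_V$. Since $b^{-1}$ is the Dirichlet inverse of $b$, convolving $a=b*c$ by $b^{-1}$ gives $c=a*b^{-1}$, a fact I will use repeatedly.

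First I would introduce the auxiliary function $g:=b_V*b^{-1}$ and record its two crucial features. Using $c=a*b^{-1}$ and commutativity one computes
\begin{align*}
a*g=a*b_V*b^{-1}=b_V*(a*b^{-1})=b_V*c,
\end{align*}
and $b_V*c(n)=\sum_{lm=n,\,l\le V}b(l)c(m)$ is exactly the second term of the claimed formula. Second, from $b*b^{-1}=\delta$ one has $g=\delta-b_{>V}*b^{-1}$; since $b_{>V}$ is supported on arguments exceeding $V$, the factor $b_{>V}*b^{-1}$ vanishes at every $m\le V$. Hence $g(m)=\delta(m)$ for $m\le V$, so $g(1)=1$ and $g(m)=0$ for $1<m\le V$; that is, $g$ is supported on $\{1\}\cup\{m:m>V\}$.

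Next I would split $a*g$ according to the size of the argument of $a$ relative to $U$:
\begin{align*}
b_V*c=a*g=\sum_{lm=n,\,l\le U}a(l)g(m)+\sum_{lm=n,\,l>U}a(l)g(m).
\end{align*}
The first sum equals $a_U*g=a_U*b_V*b^{-1}$; expanding $g=b_V*b^{-1}$ and relabelling variables, this is precisely the third (triple) sum of the statement, with the displayed range $l\le UV$ there being automatic from the constraints $r\le V$ and $s\le U$. In the second sum I would invoke the support of $g$: the term $m=1$ contributes $a(n)g(1)1_{n>U}=a(n)1_{n>U}$, while the terms with $m>V$ reproduce the final sum of the statement. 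Substituting these back, solving the resulting equation for $a(n)1_{n>U}$, and adding $a(n)1_{n\le U}=a_U(n)$ to both sides yields the asserted identity.

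The only point requiring genuine care — and where an error could most easily slip in — is the support analysis of $g=b_V*b^{-1}$: one must confirm that $g$ vanishes on the whole interval $(1,V]$, for it is exactly this vanishing that lets the tail sum $\sum_{l>U}a(l)g(m)$ separate cleanly into the diagonal contribution $a(n)1_{n>U}$ and the double sum over $l>U,\ m>V$. Everything else is the formal algebra $c=a*b^{-1}$ together with associativity of Dirichlet convolution, so I anticipate no analytic obstacle; the argument is entirely combinatorial.
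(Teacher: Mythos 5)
Your proof is correct, but it takes a genuinely different route from the paper's. The paper follows the classical derivation of Vaughan's identity: it disposes of $n\le U$ as a separate case, expands $a(n)=\sum_{lm=n}b(l)c(m)$, substitutes $c=b^{-1}*a$ in the range $l>V$, flips that range to $l\le V$ using the vanishing of the complete sum $\sum_{l\mid n/s}b(l)\,b^{-1}\bigl(\tfrac{n/s}{l}\bigr)$ for $s\ne n$, and then splits at $s\le U$ versus $s>U$. You instead package everything into the auxiliary function $g=b_V*b^{-1}$, compute $a*g=b_V*c$ once and for all, and let the support property $g(m)=\delta(m)$ for $m\le V$ produce the diagonal term $a(n)1_{n>U}$ and the fourth sum in one stroke, uniformly in $n$, with the constraint $l\le UV$ in the third sum coming out automatically. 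Your organization buys two concrete advantages. First, the support lemma for $g$ is exactly the fact the paper uses silently when it inserts the condition $k>V$ into its final rearranged sum (the step introduced by \enquote{similarly}); making it explicit also repairs a slip: the paper's displayed flip $\sum_{s\mid n}a(s)\sum_{l>V}\cdots=-\sum_{s\mid n}a(s)\sum_{l\le V}\cdots$ is false as written, being off by the $s=n$ term $-a(n)$ on the right, an error that only gets cancelled later when $k>V$ is imposed. Second, your evaluation $g(1)=b_V(1)b^{-1}(1)=1$ exposes the implicit hypothesis $V\ge 1$: for $V<1$ all three truncated sums are empty and the stated identity fails for $n>U$, so the lemma's \enquote{any $U,V>0$} is too generous---a restriction the paper acknowledges only in a parenthetical remark. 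What the paper's route offers in exchange is proximity to the standard textbook manipulation it cites; yours is cleaner and case-free, resting only on associativity of the convolution, the relation $c=b^{-1}*a$, and the support analysis you rightly single out as the crux.
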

\begin{proof}
  We follow the proof of Vaughan's identity \cite{VI-H}. First if $n<U$ we have nothing to prove as the last sum is empty and 
$$
\sum_{\substack{lm=n\\ l\leq V}}b(l)c(m)-\sum_{\substack{lm=n\\l\leq UV}}\sum_{\substack{rs=l\\r\leq V\\s\leq U}} a(s)b(r)b^{-1}(m) =0
$$
So we start with $n>U$. As $a(n)=b*c(n)$ we have 
$$
 a(n)=\sum_{lm=n}b(l)c(m).
 $$
 For any $V\in \mathbb{R},V>0$ we split the summations into two parts
\begin{align*}
a(n)=\sum_{\substack{lm =n\\l\leq V}}b(l)c(m) + \sum_{\substack{lm =n\\l > V}}b(l)c(m) .
\end{align*} 
As $b^{-1}*a=c$, for the second summation we have
\begin{align*}
\sum_{\substack{lm=n\\l>V}}b(l)c(m)&=\sum_{\substack{lm=n\\l>V}}b(l)\sum_{r|m}b^{-1}(r)a(m/r)\notag\\
&=\sum_{s|n}a(s)\sum_{\substack{l|\frac{n}{s}\\l>V}}b(l)b^{-1}\left(\frac{n/s}{l}\right).
\end{align*}
Now as $n\neq s$ (otherwise $1>V$, which is definitely false for $V>1$) we have $\sum_{\substack{l|\frac{n}{s}}}b(l)b^{-1}\left(\frac{n/s}{l}\right)=0$, which gives 
\begin{align*}
\sum_{s|n}a(s)\sum_{\substack{l|\frac{n}{s}\\l>V}}b(l)b^{-1}\left(\frac{n/s}{l}\right)= - \sum_{s|n}a(s)\sum_{\substack{l|\frac{n}{s}\\l\leq V}}b(l)b^{-1}\left(\frac{n/s}{l}\right).
\end{align*}
Similarly, splitting further over $s|n$, we have 
\begin{align*}
-\sum_{s|n}a(s)\sum_{\substack{l|\frac{n}{s}\\l\leq V}}b(l)b^{-1}\left(\frac{n/s}{l}\right)=&-\sum_{\substack{s|n\\s\leq U}}a(s)\sum_{\substack{l|\frac{n}{s}\\l\leq V}}b(l)b^{-1}\left(\frac{n/s}{l}\right)\notag\\&-\sum_{\substack{s|n\\s> U}}a(s)\sum_{\substack{l|\frac{n}{s}\\l\leq V}}b(l)b^{-1}\left(\frac{n/s}{l}\right).
\end{align*}
Rearranging the first sum on the right hand side we get 
\begin{align*}
\sum_{\substack{n=rt\\r\leq UV}}\sum_{\substack{sl=r\\s\leq U\\l\leq V}}a(s)b(l)b^{-1}\left(\frac{k}{l}\right),
\end{align*}
and similarly for the second summation we have 
\begin{align*}
\sum_{\substack{ks=n\\s> U\\k>V}}a(s)\sum_{\substack{l|\frac{n}{s}\\l\leq V}}b(l)b^{-1}\left(\frac{k}{l}\right).
\end{align*}
Hence combining equation \eqref{eq2} and \eqref{eq3} into \eqref{eq1}, we complete the proof of the Lemma \ref{Vaughanlike}. 
\end{proof}
Using Lemma \ref{Vaughanlike} we can directly prove the following result.
\begin{proposition}\label{weighted}
Let $a, b, c$ be three arithmetical functions related by
$$
a(n)=b*c(n)
$$
Then 
$$
\sum_{n\leq N}a(n)e(f(n))=\mathcal{S}_1+\mathcal{S}_2-\mathcal{S}_3-\mathcal{S}_3,
$$
where 
\begin{align*}
\mathcal{S}_1&=O\left(\sum_{n\leq U}a(n)\right),\\
\mathcal{S}_2&=\sum_{l\leq V}b(l)\sum_{m\leq \frac{N}{l}}c(m)e(f(lm)),\\
\mathcal{S}_3&=\sum_{l\leq UV}\left(\sum_{\substack{rs=l\\r\leq V\\ s\leq U}}b(r)a(s)\right)\sum_{m\leq \frac{N}{l}}b^{-1}(m)e(f(lm)),\\
\mathcal{S}_4&=\sum_{V<l\leq \frac{N}{U}}\left(\sum_{\substack{r|l\\r\leq V}}b(r)b^{-1}\left(\frac{l}{r}\right)\right)\sum_{U<m\leq \frac{N}{l}}a(m)e(f(lm)).
\end{align*}

\end{proposition}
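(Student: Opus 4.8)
The plan is to feed the pointwise identity of Lemma~\ref{Vaughanlike} directly into the sum $\sum_{n\leq N}a(n)e(f(n))$. Since that lemma expresses $a(n)$ as a sum of four terms valid for every $n$, multiplying through by $e(f(n))$ and summing over $n\leq N$ splits the left-hand side, by linearity, into exactly four pieces, one per term. The entire content of the proposition is then a reindexing exercise: in each piece I write $n=lm$ (or $n=lm$ with a further factorisation $l=rs$), convert the constraint $n\leq N$ together with the side conditions attached to each term into a range of the form ``outer variable bounded, inner variable at most $N/(\text{outer})$'', and collect the coefficient of the inner exponential. There are no analytic estimates to make here beyond the trivial observation $|e(f(n))|=1$, so no convergence or cancellation issues arise.

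Concretely, the first term $a(n)1_{[1,U]}(n)$ contributes $\sum_{n\leq U}a(n)e(f(n))$, which is $O\!\left(\sum_{n\leq U}a(n)\right)$ by $|e(f(n))|=1$, giving $\mathcal{S}_1$. For the second term I set $n=lm$ with $l\leq V$; the condition $lm=n\leq N$ becomes $m\leq N/l$, and pulling $b(l)$ through the $m$-summation produces $\mathcal{S}_2=\sum_{l\leq V}b(l)\sum_{m\leq N/l}c(m)e(f(lm))$. For the third term the outer variable is $l\leq UV$ carrying the coefficient $\sum_{rs=l,\,r\leq V,\,s\leq U}a(s)b(r)$, while $m=n/l$ runs up to $N/l$ and is weighted by $b^{-1}(m)$; regrouping the sum over $l$ and the internal factorisation $l=rs$ yields $\mathcal{S}_3$ exactly as stated. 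Each of these is a mechanical transcription of the corresponding line of Lemma~\ref{Vaughanlike}.

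The one step needing a little care is the fourth term, where Lemma~\ref{Vaughanlike} has the variable weighted by $a$ playing the role of $l$ (with $l>U$) and the divisor-type coefficient $\sum_{r\mid m,\,r\leq V}b(r)b^{-1}(m/r)$ attached to the complementary variable $m$ (with $m>V$). To match the formula for $\mathcal{S}_4$ in the statement I simply interchange the names of the two factors of $n$, so that the variable carrying the divisor coefficient is called $l$ (now ranging over $V<l\leq N/U$, the bound $N/U$ coming from $m>U$ in the original labelling together with $lm\leq N$) and the variable weighted by $a$ is called $m$ (ranging over $U<m\leq N/l$). This interchange, plus the conversion of the joint constraint $l>U,\ m>V,\ lm\leq N$ into the displayed ranges, is the main obstacle, and it is purely bookkeeping. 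Finally the signs $+\mathcal{S}_2-\mathcal{S}_3-\mathcal{S}_4$ are inherited verbatim from the signs in Lemma~\ref{Vaughanlike} (the repeated ``$-\mathcal{S}_3$'' in the displayed statement being a typo for ``$-\mathcal{S}_3-\mathcal{S}_4$''), which completes the derivation.
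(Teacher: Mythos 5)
Your proof is correct and takes essentially the same route as the paper: the paper's own proof of Proposition~\ref{weighted} is the single line that it ``follows directly from Lemma~\ref{Vaughanlike}'', and your argument (summing the pointwise identity against $e(f(n))$, reindexing each term, and relabelling the two factors of $n$ in the fourth term to get the range $V<l\leq N/U$, $U<m\leq N/l$) is exactly the bookkeeping that line leaves implicit. You are also right that the displayed $-\mathcal{S}_3-\mathcal{S}_3$ in the statement is a typo for $-\mathcal{S}_3-\mathcal{S}_4$.
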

\begin{proof}
 The proof follows directly from the Lemma \ref{Vaughanlike}
\end{proof}
\section{ Proof of Theorem \ref{MT1} and Theorem \ref{MT2} }
\subsection{Proof of Theorem \ref{MT1}}
It is easy to see that $\tau(n)=1*1(n)$. Then in light of Proposition \ref{weighted}, we have 
\begin{align}\label{main-sum1}
\sum_{n\leq N} \tau(n)e(f(n))= \mathcal{T}_1 + \mathcal{T}_2 - \mathcal{T}_3 - \mathcal{T}_4,
\end{align}
where
\begin{align*}
    \mathcal{T}_1 =& O\left(\sum_{n\leq U}\tau(n)\right),\\
    \mathcal{T}_2 =& \sum_{l\leq V}\sum_{m\leq \frac{N}{l}}e(f(lm)),\\
    \mathcal{T}_3 =& \sum_{l<UV} \phi_1(l)\sum_{m\leq \frac{N}{l}}\mu(m)e(f(mn)),\\
    \mathcal{T}_4 =& \sum_{V<l\leq \frac{N}{U}}\phi_2(l)\sum_{U<m\leq \frac{N}{l}}\tau(m)e(f(lm));
\end{align*}
and 
\begin{align*}
    \phi_1(l) = \sum_{\substack{rs=t\\r< V\\s\leq U}}\tau(s)~\text{and}~\phi_2(l) = \sum_{\substack{r|l\\r\leq V}}\mu\left(\frac{l}{r}\right).
\end{align*}
We have 
\begin{align*}
              \phi_1(l)=(1*1*1)(l)\ll_{\epsilon}l^{\epsilon}.
\end{align*}
and
\begin{align*}
    \phi_2(l) = \sum_{\substack{r|l\\r\leq V}}\mu\left(\frac{l}{r}\right)\ll \sum_{\substack{r|l}}1\ll \tau(l)\ll_{\epsilon} l^{\epsilon}.
\end{align*}
At this point our goal is to estimate all $\mathcal{T}_i$'s. First let us fix the size of $U$ and $V$. For a fix $\theta$ take $U = V = N^{\theta}$.  For all four sums except the first one, we partition the intervals into dydic intervals. After dyadic subdivision, for each 
$\mathcal{T}_i$'s we have the upper bound of the form
$
\log N \max_{\substack{L\in [A, B]}} \Delta(L),
$
where
\begin{align}\label{delta}
\Delta(L) = \sum_{m \leq \frac{N}{L}} A(m) \sum_{\substack{l\sim L\\ml\leq N}} B(l)e(f(lm)).
\end{align}

For the case of $\mathcal{T}_2$, $A = 1, ~B = N^{\theta}, ~A(m) = 1$ and $B(l) = 1$. Here we use a work of Harman. We apply \cite[Corollary of Lemma 3]{HAR} if 
$$
L^R \geq \min \left\{q^{-1}N^{k}, ~q, ~N^{1-\theta}\right\},
$$
otherwise we use Lemma 4. Using Lemma 3 we get 
\begin{align*}
\Delta(L) \ll& N^{1+\epsilon}\bigg(\frac{1}{L^R} + \frac{1}{q} + \frac{q}{N^k} + \frac{1}{N^{1-\theta}}\bigg)^{\gamma}\\
          \ll& N^{1+\epsilon}\bigg(\frac{1}{q} + \frac{q}{N^k} + \frac{1}{N^{1-\theta}}\bigg)^{\gamma},
\end{align*}
where $\gamma=4^{1-k}$. If we use Lemma 4 then we get 
\begin{align*}
    \Delta(L) \ll& N^{1+\epsilon} L^{\frac{k-1}{R}}\bigg(\frac{1}{q}+\frac{q}{N^k}+\frac{1}{N^{1-\theta}}\bigg)^{1/R}\\
    \ll& N^{1+\epsilon} L^{\frac{k-1}{R}}\bigg(\frac{1}{q}+\frac{q}{N^k}+\frac{1}{N^{1-\theta}}\bigg)^{\frac{1}{R}-\gamma}\bigg(\frac{1}{q}+\frac{q}{N^k}+\frac{1}{N^{1-\theta}}\bigg)^{\gamma}\\
    \ll& N^{1+\epsilon} L^{\frac{k-1}{R}} \bigg(\frac{1}{L^R}\bigg)^{\frac{1}{R}-\gamma}\bigg(\frac{1}{q}+\frac{q}{N^k}+\frac{1}{N^{1-\theta}}\bigg)^{\gamma}\\
    \ll& N^{1+\epsilon}\bigg(\frac{1}{q}+\frac{q}{N^k}+\frac{1}{N^{1-\theta}}\bigg)^{\gamma} L^{\frac{k-1}{R}-1+\gamma R}.
\end{align*}
As $1-\frac{k-1}{R}\geq \gamma R$, we have $L^{\frac{k-1}{R}-1+\gamma R} \leq 1$. This implies Lemma $4$ finally gives
$$
\Delta(N) \ll N^{1+\epsilon}\bigg(\frac{1}{q}+\frac{q}{N^k}+\frac{1}{N^{1-\theta}}\bigg)^{\gamma}.
$$
Combining both we get the estimate for $\mathcal{T}_2$,
\begin{equation}\label{sumtwo}
    \mathcal{T}_2 \ll \log N\cdot N^{1+\epsilon} \bigg(\frac{1}{q}+\frac{q}{N^k}+\frac{1}{N^{1-\theta}}\bigg)^{\gamma}.
\end{equation}
Following the same method with some minor variation one can show that 
\begin{equation}\label{sumthreefour}
    \mathcal{T}_i\ll \log N\cdot N^{1+\frac{4\epsilon}{3}} \bigg(\frac{1}{q}+\frac{q}{N^k}+\frac{1}{N^{1-\theta}}\bigg)^{\gamma},
\end{equation}
for $i=3,4$. Now if we start with $\epsilon/{100}>0$, then putting bounds \eqref{sumtwo} and \eqref{sumthreefour} for $\mathcal{T}_i$'s in \eqref{main-sum1}, we finally get
\begin{equation*}\label{final1}
   \sum_{n\leq N} \tau(n)e(f(n)) \ll \log N \cdot N^{1+\epsilon} \bigg(\frac{1}{q}+\frac{q}{N^k}+\frac{1}{N^{1-\theta}}\bigg)^{\gamma}.
\end{equation*}
\subsection{Proof of Theorem \ref{MT2}}
We now calculate $\sum_{n\leq N} \mu^2(n)e(f(n))$. Let us first observe that 
$$
\mu^2(n)=1*\nu(n),
$$
where 
\[
    \nu(n)= 
\begin{cases}
    \mu(\sqrt{n}),& \text{if } n \quad \text{is a square};\\
    0,              & \text{otherwise}.
\end{cases}
\]
Hence as before we can write this summation as follows
$$
\sum_{n\leq N} \mu^2(n)e(f(n)) = \mathcal{M}_1 + \mathcal{M}_2 - \mathcal{M}_3 - \mathcal{M}_4,
$$
where
\begin{align*}
    \mathcal{M}_1 =& O\left(\sum_{n\leq U}\mu^2(n)\right),\\
    \mathcal{M}_2 =& \sum_{l\leq U}\sum_{m\leq \frac{N}{l}}\nu(m)e(f(lm)),\\
    \mathcal{M}_3 =& \sum_{l<U^2} \phi_3(l)\sum_{m\leq \frac{N}{l}}\mu(m)e(f(mn)),\\
    \mathcal{M}_4 =& \sum_{U<l\leq \frac{N}{U}}\phi_4(l)\sum_{U<m\leq \frac{N}{m}}\mu^2(m)e(f(lm)),
\end{align*}
and
\begin{align*}
    \phi_3(l) = \sum_{\substack{rs=l\\r< U\\s\leq U}}\nu(s)
              = \ll\tau(l)
               \ll_{\epsilon}l^{\epsilon}.
\end{align*}
and
\begin{align*}
    \phi_2(l) = \sum_{\substack{r|l\\r\leq V}}\mu\left(\frac{l}{r}\right)
               \ll \sum_{\substack{r|l}}1
               \ll \tau(l)\ll_{\epsilon} l^{\epsilon}.
\end{align*}
Now as before we may start with $\epsilon/{100}>0$ instead of $\epsilon$ to show in the similar fashion as in the proof of Theorem \ref{MT1},
\begin{equation*}\label{final2}
   \sum_{n\leq N} \mu^2(n)e(f(n)) \ll  N^{1+\epsilon} \bigg(\frac{1}{q}+\frac{q}{N^k}+\frac{1}{N^{1-\theta}}\bigg)^{\gamma}
\end{equation*}
which completes the proof of Theorem \ref{MT2}.
\section{Proof of Theorem \ref{MT3} and \ref{MT4}}
Let $$S_{h, \tau}=\sum_{N^{1/2}\leq n\leq  N}\tau(n)e(h f(n)).$$
We want to use Lemma \ref{lem-h3}, i.e., our aim is to establish
\begin{align*}
\sum_{h=1}^{H}\left|\sum_{\substack{N^{1/2}<n\leq N\\\tau(n)>2}}\tau(n)e(hf(n))\right|<\frac{1}{6}\sum_{\substack{N^{1/2}<n\leq N\\\tau(n)>2}}\tau(n).
\end{align*}
Then it will imply that $\exists~ n\in [N^{1/2},N]\cap \{n\in\mathbb{N}:\tau(n)>2\}$ such that 
\begin{align*}
||f(n)||\leq \frac{1}{H}.
\end{align*} 
Now we have
\begin{align*}
\sum_{\substack{N^{1/2}<n\leq N\\\tau(n)>2}}\tau(n)=\sum_{\substack{N^{1/2}<n\leq N}}\tau(n)-\sum_{\substack{N^{1/2}<n\leq N\\\tau(n)\leq 2}}\tau(n).
\end{align*}
The last sum in the right hand side is essentially running over primes. Hence 
\begin{align*}
\sum_{\substack{N^{1/2}<n\leq N\\\tau(n)>2}}\tau(n) \geq  \sum_{\substack{N^{1/2}<n\leq N}}\tau(n)-2\sum_{\substack{N^{1/2}<p\leq N}}1 \gg N\log N,
\end{align*}
Now using triangle inequality we have 
\begin{align}\label{total}
\sum_{h=1}^{H}\left|\sum_{\substack{N^{1/2}<n\leq N\\\tau(n)>2}}\tau(n)e(hf(n))\right|&\leq  \sum_{h=1}^{H}\left|\sum_{\substack{N^{1/2}<n\leq N}}\tau(n)e(hf(n))\right|\notag\\
&\hspace{1cm}+\sum_{h=1}^{H}\left|\sum_{\substack{N^{1/2}<n\leq N\\\tau(n)\leq 2}}\tau(n)e(hf(n))\right|.
\end{align}
Again the inner summation of the  second part runs over the primes. Hence
\begin{align*}
\sum_{h=1}^{H}\left|\sum_{\substack{N^{1/2}<n\leq N\\\tau(n)\leq 2}}\tau(n)e(hf(n))\right|\leq 2\sum_{h=1}^{H}\left|\sum_{\substack{N^{1/2}<p\leq N}}e(hf(p))\right|.
\end{align*}
By using partial summation formula we have 
\begin{align*}
&\sum_{h=1}^{H}\left|\sum_{\substack{N^{1/2}<n\leq N}}e(hf(n))\right|\\&=\sum_{h=1}^{H}\left|\frac{1}{\log N}\sum_{p\leq N}\log p\cdot e\left(hf(p)\right)-\frac{2}{\log N}\sum_{p\leq N^{1/2}}\log p\cdot e\left(hf(p)\right)\right.\\&\left.\hspace{7cm}+\int_{N^{1/2}}^{N}\sum_{p\leq t}\log p\cdot e\left(hf(p)\right)\frac{dt}{t\log^2 t}\right|\\
&\leq \frac{1}{\log N}\sum_{h\leq H}\left|\sum_{N^{1/2}<p\leq N}\log p\cdot e\left(hf(p)\right)\right|\\&\hspace{3.5cm}+\frac{1}{\log N}\max_{N^{1/2}<t< N}\sum_{h\leq H}\left|\sum_{t^{1/2}<p<t}\log p\cdot e(hf(p))\right|+2\frac{N^{\frac{1}{2}}H}{\log N}.
\end{align*}
To this end we want to find upper bounds for the following sums
\begin{align}\label{eq1}
\sum_{h\leq H}\left|\sum_{N^{1/2}<n<N}\tau(n)e\left(hf(n)\right)\right|,
\end{align}
\begin{align}\label{eq2}
\sum_{h\leq H}\left|\sum_{N^{1/2}<p<N}\log p\cdot e\left(hf(p)\right)\right|,
\end{align}
\begin{align}\label{eq3}
\max_{N^{1/2}<t\leq N}\sum_{h\leq H}\left|\sum_{t^{1/2}<p<t}\log p\cdot e\left(hf(p)\right)\right|.
\end{align}
On the other hand,
let $$S_{h,\mu^2}=\sum_{N^{1/2}\leq n\leq  N}\mu^2(n)e(h f(n)).$$
Again, we want to use Lemma \ref{lem-h3}, i.e., our aim is to establish
\begin{align*}
\sum_{h=1}^{H}\left|\sum_{n\in \mathcal{A}}\mu^2(n)e(hf(n))\right|<\frac{1}{6}\sum_{n \in \mathcal{A}}\mu^2(n),
\end{align*}
Where 
$$
\mathcal{A}= \{N^{1/2}<n\leq N: (n, P(N^{1/2}))>1\}.
$$
Then by Lemma  \ref{lem-h3} it will imply that $\exists~ n\in \mathcal{A}$ such that 
\begin{align*}
||f(n)||\leq \frac{1}{H}.
\end{align*} 
Now we have
\begin{align*}
\sum_{\substack{N^{1/2}<n\leq N\\ n\in \mathcal{A}}}\mu^2(n)=\sum_{\substack{N^{1/2}<n\leq N}}\mu^2(n)-\sum_{\substack{N^{1/2}<n\leq N\\ n\in \mathcal{B}}}\mu^2(n)
\end{align*}
where
$$
\mathcal{B}= \{N^{1/2}<n\leq N: (n,P(N^{1/2}))=1\}.
$$
Now the set $\mathcal{B}$ precisely consists of all primes in between $N^{1/2}$ and $N$. Hence
\begin{align*}
\sum_{\substack{N^{1/2}<n\leq N\\n \in \mathcal{A}}}\mu^2(n) \geq  \sum_{\substack{N^{1/2}<n\leq N}}\mu^2(n)-\sum_{\substack{N^{1/2}<p\leq N}}1\gg  N,
\end{align*}
Now as before using triangle inequality to have an upper bound for 
\begin{align*}
\sum_{h=1}^{H}\left|\sum_{n\in \mathcal{A}}\mu^2(n)e(hf(n))\right|
\end{align*}
it amounts to find out upper bounds for the following quantities 
\begin{align}\label{eq4}
\sum_{h\leq H}\left|\sum_{N^{1/2}<n<N}\mu^2(n)e\left(hf(n)\right)\right|,
\end{align}
\eqref{eq2} and \eqref{eq3}. \par
Our main aim is to estimate $\sum_{h=1}^{H}|S_{h, \tau}|$ and $\sum_{h=1}^{H}|S_{h, \mu^2}|$. We will describe the method for one, say $\sum_{h=1}^{H}|S_{h, \tau}|$ . The other will follow similarly. We noticed in Section 3 that $|S_{h, \tau}|$ can further split into four sums and except the first one all can be further divided into sub-sums over dyadic intervals. First we take $$U = V = N^{1/2}.$$ As \eqref{delta}, we need to now take care of sums of like 
$$
\sum_{h=1}^{H}\left|\sum_{m \leq \frac{N}{L}} A(m) \sum_{\substack{l\sim L\\ml\leq N}} B(l)e(hf(lm))\right|.
$$
Let us denote innermost sum by $\Delta(L, h)$ . So if $R'$ is such that $\frac{1}{R'}+\frac{1}{R^2}=1$, then applying H\"{o}lder with the weights $R'$ and $R^2$ and using Lemma \ref{lem-h1} $(W=\frac{N}{L}, X=L)$, we get
\begin{align*}
\sum_{h=1}^{H}|\Delta(L, h)|&\leq \left(\sum_{h=1}^{H}1\right)^{1/R'}\cdot\left(\sum_{h=1}^{H}|\Delta(L, h)|^{R^2}\right)^{1/R^2}\\
&\ll H^{1/R'}\frac{N}{L}L\left(\sum_{h=1}^{H}L^{-R}+(N)^{-k+\epsilon/2}\sum_{h=1}^{H}\sum_{z=1}^{Y}\min\left(\frac{N}{L},\frac{1}{||\alpha h z||}\right)\right)^{1/R^2}\\
&\ll H^{1/R'}N\left(\sum_{h=1}^{H}L^{-R}+(N)^{-k+\epsilon/2}\sum_{r=1}^{HY}\left(\sum_{hz=r}1\right)\min\left(\frac{N}{L},\frac{1}{||\alpha r||}\right)\right)^{1/R^2}.
\end{align*}
Hence putting $Y=L^k(\frac{N}{L})^{k-1}(k!)^2$ and putting the bound for divisor function one can derive
\begin{align}\label{eq-a1}
\sum_{h=1}^{H}|\Delta(L, h)|\ll (NH)^{1+\epsilon/2}{\left(L^{-R}+\frac{1}{q}+\frac{1}{N^{\frac{1}{2}}}+\frac{q}{HN^{k}}\right)}^{\gamma}.
\end{align}
Now using H\"{o}lder inequality again we get
\begin{align*}
\sum_{h=1}^{H}|\Delta(L, h)|&\leq \left(\sum_{h=1}^{H}1\right)^{1/R''}\cdot\left(\sum_{h=1}^{H}|\Delta(L, h)|^{R}\right)^{1/R},
\end{align*}
where $\frac{1}{R''}+\frac{1}{R}=1$.
Now using Lemma \ref{lem-h2} we get
\begin{align*}
\sum_{h=1}^{H}|\Delta(L, h)|\ll H^{1/R^{''}}\left(\sum_{h=1}^{H}L^{\epsilon/2+R-1}\sum_{v=1}^{L}\sum_{y=1}^{k!(\frac{N}{L})^{k-1}}(\frac{N}{L})^{R-k+\epsilon/2}\min\left(\frac{N}{L},\frac{1}{||\alpha h y v^k||}\right)\right)^{1/R}.
\end{align*}
Now clubbing  $h, y, v^k$ together and using the upper bound of $\tau_3$  one can get 
\begin{align}\label{eq-a2}
\sum_{h=1}^{H}|\Delta(L, h)|\ll (NH)^{1+\epsilon/2}X^{\frac{k-1}{R}}\left(\frac{1}{q}+\frac{1}{N^{\frac{1}{2}}}+\frac{q}{N^kH}\right)^{1/R}.
\end{align}

We use the bound in \eqref{eq-a1} for 
\begin{align*}
X > \min \{N^{(1-\theta)/R}, q^{1/r}, NL^{1/k}q^{-1/k}\}.
\end{align*}
Otherwise we use \eqref{eq-a2}. In either case we find 
$$
\sum_{h=1}^{H}|\Delta(L, h)| \ll (NH)^{1+\epsilon/2}{\left(\frac{1}{q}+\frac{1}{N^{\frac{1}{2}}}+\frac{q}{HN^{k}}\right)}^{\gamma}.
$$
Now combining all four parts we get 
\begin{align*}
    \sum_{h=1}^{H}\left|\sum_{\substack{N^{1/2}<n\leq N\\\tau(n)>2}}\tau(n)e(hf(n))\right| \ll& \log N \cdot N^{1/2}H +\\& \log N \cdot {(NH)}^{1+\epsilon/2}{\big(q^{-1/2}+N^{-1/2}+qN^{-k}H^{-1}\big)}^{\gamma},
\end{align*}
and similarly we get 
\begin{align*}
    \sum_{h=1}^{H}\left|\sum_{\substack{N^{1/2}<n\leq N\\n\in \mathcal{A}}}\mu^2(n)e(hf(n))\right| \ll& N^{1/2}H +\\& \log N \cdot {(NH)}^{1+\epsilon/2}{\big(q^{-1/2}+N^{-1/2}+qN^{-k}H^{-1}\big)}^{\gamma}.
\end{align*}
Using \cite{HAR}, 
we get an upper bound for \eqref{eq2} as
\begin{align*}
    \sum_{h\leq H}\left|\sum_{N^{1/2}<p<N}\log p\cdot e\left(hf(p)\right)\right| \ll & N^{1/2}H + \\& \log N \cdot  {(NH)}^{1+\epsilon/2}{\big(q^{-1/2}+N^{-1/2}+qN^{-k}H^{-1}\big)}^{\gamma}.
\end{align*}
With a slight variation, but essentially by the same method one can also get an upper bound for \eqref{eq3} as
\begin{align*}
    &\max_{N^{1/2}<t\leq N}\sum_{h\leq H}\left|\sum_{t^{1/2}<p<t}\log p\cdot e\left(hf(p)\right)\right| \ll N^{1/2}H \\&\hspace{3cm}+  \log N \cdot  {(NH)}^{1+\epsilon/2}{\big(q^{-1/2}+N^{-1/4}+qN^{-k/2}H^{-1}\big)}^{\gamma}.
\end{align*}
Now we choose $q=N^{1/2}$ and $H=N^{\gamma/4-\epsilon}$. Hence combining all upper bounds of \eqref{eq1}, \eqref{eq2}, \eqref{eq3} and \eqref{eq4}, we have 
\begin{align*}
    \sum_{h=1}^{H}\left|\sum_{\substack{N^{1/2}<n\leq N\\\tau(n)>2}}\tau(n)e(hf(n))\right| \ll \log N \cdot N^{1-\epsilon^2/2}\ll \frac{1}{6} N\cdot \log N < \frac{1}{6} \sum_{\substack{N^{1/2}<n\leq N\\\tau(n)>2}}\tau(n)
\end{align*}
and 
\begin{align*}
    \sum_{h=1}^{H}\left|\sum_{\substack{N^{1/2}<n\leq N\\n\in \mathcal{A}}}\mu^2(n)e(hf(n))\right| &\ll N^{1-\epsilon^2/4}\ll \frac{1}{6} N < \frac{1}{6} \sum_{\substack{N^{1/2}<n\leq N\\n\in \mathcal{A}}}\mu^2(n),
\end{align*}
which using Lemma \ref{lem-h3}  establish Theorem \ref{MT3} and \ref{MT4}.

\section{Acknowledgement}
The first author would like to thank Department of Mathematics, Thapar Institute of Engineering and Technology and the second author would like to thank Chennai Mathematical Institute for providing excellent working conditions.  During the preparation of this article, D.M. was supported
by the National Board of Higher Mathematics post-doctoral fellowship (No.: 0204/10/(8)/2023/R\&D-II/2778).


\end{document}